\DeclarePairedDelimiter{\prn}{(}{)}
\DeclarePairedDelimiter{\set}{\{}{\}}
\DeclarePairedDelimiterX{\Set}[2]{\{}{\}}{\,{#1}\;\delimsize|\;{#2}\,}
\newcommand{\symdif}{\mathbin{\triangle}}
\newcommand{\Mod}[1]{\ (\mathrm{mod}\ #1)}
\crefname{conjecture}{Conjecture}{Conjectures}
\crefname{claim}{Claim}{Claims}
\newtheorem{theorem}{Theorem}[section]
\newtheorem{lemma}[theorem]{Lemma}
\newtheorem{proposition}[theorem]{Proposition}
\newtheorem{claim}[theorem]{Claim}
\newtheorem{conjecture}[theorem]{Conjecture}
\newtheorem{question}[theorem]{Question}
\newtheorem{corollary}[theorem]{Corollary}
\theoremstyle{definition}
\newtheorem{example}[theorem]{Example}
\newtheorem{remark}[theorem]{Remark}
\def\csname ver@etex.sty\endcsname{3000/12/31}
\newcommand{\GF}{\mathrm{GF}}
\newcommand{\cups}{\cup\dots\cup}
\newcommand{\cH}{\mathcal{H}}
\newcommand{\N}{\mathbb{N}}
\newcommand{\Z}{\mathbb{Z}}
\newcommand{\R}{\mathbb{R}}
\def\final{1}  
\def\iflong{\iffalse}
\newcommand{\dnote}[1]{{\color{orange}[{\tiny \textbf{Dani:} \bf #1}]\marginpar{\color{orange}*}}}
\newcommand{\ynote}[1]{{\color{red}[{\tiny \textbf{Yutaro:} \bf #1}]\marginpar{\color{red}*}}}
\newcommand{\rnote}[1]{{\color{teal}[{\tiny \textbf{Ryuhei:} \bf #1}]\marginpar{\color{teal}*}}}
\newcommand{\onote}[1]{{\color{purple}[{\tiny \textbf{Taihei:} \bf #1}]\marginpar{\color{purple}*}}}
\newcommand{\snote}[1]{{\color{cyan}[{\tiny \textbf{Tamás:} \bf #1}]\marginpar{\color{cyan}*}}}
\newcommand{\dnote}[1]{}
\newcommand{\ynote}[1]{}
\newcommand{\rnote}[1]{}
\newcommand{\onote}[1]{}
\newcommand{\snote}[1]{}
\newif\ifshortintro
\begin{document}

\title{\texorpdfstring{%
  Towards the Proximity Conjecture on Group-Labeled Matroids%
}{%
  Towards the Proximity Conjecture on Group-Labeled Matroids%
}}
\author{
\hspace{5mm}
Dániel Garamvölgyi\thanks{MTA-ELTE Matroid Optimization Research Group, Department of Operations Research,
Eötvös Loránd University, and HUN-REN Alfréd Rényi Institute of Mathematics, Budapest, Hungary. E-mail: \texttt{daniel.garamvolgyi@ttk.elte.hu}.}
\hspace{5mm}
\and
Ryuhei Mizutani\thanks{Department of Mathematical Informatics, Graduate School of Information Science and Technology, The University of Tokyo, Tokyo, Japan. E-mail: \texttt{ryuhei\_mizutani@mist.i.u-tokyo.ac.jp}.}
\hspace{5mm}
\and
Taihei Oki\thanks{Institute for Chemical Reaction Design and Discovery (ICReDD), Hokkaido University, Sapporo, Hokkaido, Japan. E-mail: \texttt{oki@icredd.hokudai.ac.jp}.}
\hspace{5mm}
\and
Tamás Schwarcz\thanks{MTA-ELTE Matroid Optimization Research Group and HUN-REN–ELTE Egerváry Research Group, Department of
Operations Research, Eötvös Loránd University, Budapest, Hungary. E-mail: \texttt{tamas.schwarcz@ttk.elte.hu}.}
\and 
Yutaro Yamaguchi\thanks{Department of Information and Physical Sciences, Graduate School of Information Science and Technology, Osaka University, Osaka, Japan. E-mail: \texttt{yutaro.yamaguchi@ist.osaka-u.ac.jp}.}
}
\date{}

\maketitle

\begin{abstract}
    Consider a matroid $M$ whose ground set is equipped with a labeling to an abelian group.
    A basis of $M$ is called $F$-avoiding if the sum of the labels of its elements is not in a forbidden label set $F$.
    Hörsch, Imolay, Mizutani, Oki, and Schwarcz~(2024)
    conjectured that if an $F$-avoiding basis exists, then any basis can be transformed into an $F$-avoiding basis by exchanging at most $|F|$ elements.
    This \emph{proximity conjecture} is known to hold for certain specific groups; in the case where $|F| \le 2$; or when the matroid is subsequence-interchangeably base orderable (SIBO), which is a weakening of the so-called strongly base orderable (SBO) property.
    
    In this paper, we settle the proximity conjecture for sparse paving matroids or in the case where $|F| \le 4$.
    Related to the latter result, we present the first known example of a non-SIBO matroid.
    We further address the setting of multiple group-label constraints, showing proximity results for the cases of two labelings, SIBO matroids, matroids representable over a fixed, finite field, and sparse paving matroids.

\medskip

\noindent \textbf{Keywords:}
    sparse paving matroid, subsequence-interchangeable base orderability, congruency constraint, multiple labelings

\end{abstract}

\section{Introduction}

Let $E$ be a finite ground set and let $\psi\colon E \to \Gamma$ be a labeling from $E$ to an abelian group $\Gamma$.
A \emph{group-label constraint} requires for a solution $X \subseteq E$ to satisfy $\psi(X) \coloneqq \sum_{e \in X} \psi(e) \notin F$, where $F\subseteq \Gamma$ is a prescribed set of forbidden labels.
Such a solution $X$ is called \emph{$F$-avoiding}.
An $F$-avoiding $X$ is also called \emph{zero} in the case when $F = \Gamma \setminus \set{0}$ (i.e., $\psi(X) = 0$), and \emph{non-zero} in the case when $F = \set{0}$ (i.e., $\psi(X) \ne 0$).
Several constraints in combinatorial optimization, such as parity, congruency, and exact-weight constraints, are representable as group-label constraints by letting $\Gamma$ be a cyclic group $\Z_m$ or the integers $\Z$, and $F$ be the complement of a singleton.
These constraints have been studied for many classical combinatorial optimization problems, including matching~\cite{papadimitriou1982exact,mulmuley1987matching,Artmann2017,elmaalouly2023exact,jia2023exact}, arborescence\cite{barahona1987exact}, submodular function minimization~\cite{goemans1995minimizing,nagele2019submodular}, minimum cut~\cite{nagele2020new}, and independent sets or bases in a matroid~\cite{camerini1992exact,doronarad2024lower,rieder1991lattices}.
Also, the non-zero and $F$-avoiding constraints have been particularly well-studied for path and cycle problems on graphs~\cite{chudnovsky2006apath,chudnovsky2008algorithm,iwata2022finding,kawase2020twoforbidpath,kobayashi2017finding,reed1999mangoes,wollan2010packing,wollan2011packing,thomas2023packing}.

In this paper, we study group-label constraints on matroid bases.
This line of research was initiated by Liu and Xu~\cite{liu2023congruency}, who addressed the problem of finding a zero basis. 
Hörsch, Imolay, Mizutani, Oki, and Schwarcz~\cite{horsch2024problems} considered non-zero bases, and more generally, $F$-avoiding bases, posing the following conjecture.

\begin{conjecture}[{Proximity Conjecture~\cite{horsch2024problems}}]\label{conj:proximity}
  Let $M$ be a matroid, $\psi \colon E \to \Gamma$ a labeling from the ground set $E$ of $M$ to an abelian group $\Gamma$, and $F \subseteq \Gamma$ a finite collection of forbidden labels.
  Then, for any basis $A$ of $M$, there exists an $F$-avoiding basis $B$ of $M$ such that $|A \setminus B| \le |F|$, provided that at least one $F$-avoiding basis exists.
\end{conjecture}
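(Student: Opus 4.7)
The plan is to proceed by minimal counterexample combined with a pigeonhole-based exchange argument. Suppose the conjecture fails, so that for some basis $A$ every $F$-avoiding basis $B$ satisfies $|A \setminus B| \ge |F|+1$, and fix an $F$-avoiding basis $B^*$ minimizing $d := |A \setminus B^*|$; by assumption $d \ge |F|+1$. The goal is to produce an $F$-avoiding basis $B'$ with $|A \setminus B'| < d$, contradicting the minimality of $B^*$.

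For the setup I would invoke the symmetric exchange axiom to obtain a bijection $\pi \colon A \setminus B^* \to B^* \setminus A$ such that $B_y := B^* - \pi(y) + y$ is a basis for every $y \in A \setminus B^*$. Since $|A \setminus B_y| = d-1 < d$, minimality forces $\psi(B_y) \in F$ for every $y$, yielding $d$ basis labels inside the $|F|$-element set $F$. Pigeonhole then gives distinct $y_1,y_2 \in A \setminus B^*$ with $\psi(B_{y_1}) = \psi(B_{y_2})$, equivalently $\psi(y_1)-\psi(\pi(y_1)) = \psi(y_2)-\psi(\pi(y_2))$.

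The main obstacle will be converting this label coincidence into a contradiction. The natural candidate is $B' := B^* - \pi(y_1) - \pi(y_2) + y_1 + y_2$ (or some reshuffled two-element exchange), but in an arbitrary matroid such a swap need not be a basis, and even when it is, its label $\psi(B^*) + 2\bigl(\psi(y_1)-\psi(\pi(y_1))\bigr)$ can remain in $F$. Bridging this gap is essentially the entire content of the conjecture; strongly base orderable and SIBO properties make the bridge easy precisely because they guarantee the required multi-element exchanges exist and can be iterated, which is why those cases are already settled.

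To attack the special cases stated in the abstract, I would specialize as follows. For sparse paving matroids, I would exploit that the only obstruction to $B^* - X + Y$ being a basis is that $(B^* \setminus X) \cup Y$ is a circuit-hyperplane, and that any two such circuit-hyperplanes differ in at least two elements; this rigidity lets many two- and three-element exchanges simultaneously yield bases, so a strengthened pigeonhole over these enlarged exchange families should escape $F$. For $|F| \le 4$, I would build on the already-known $|F| \le 2$ case and run a case analysis on the multiset $\{\psi(y)-\psi(\pi(y)) : y \in A \setminus B^*\}$, using short multi-element exchanges to reroute around the few bad label patterns; the smallness of $|F|$ is exactly what makes this case analysis tractable, and extending it to arbitrary $|F|$ is where I expect the hard matroid-theoretic obstructions to reappear.
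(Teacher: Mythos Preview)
The statement you are asked to prove is an open conjecture; the paper does not prove it in general, and neither does your proposal. You correctly identify the core obstruction yourself: after pigeonholing to find $y_1,y_2$ with equal label increments, the double swap $B^*-\pi(y_1)-\pi(y_2)+y_1+y_2$ need not be a basis and its label need not escape $F$. So the general case is, as you say, exactly where the difficulty lies.

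For the two special cases the paper does settle, your sketches diverge substantially from what the paper actually does, and the gaps are real rather than cosmetic.

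\textbf{Setup.} The paper does not work with a bijection $\pi$ and single exchanges $B^*-\pi(y)+y$. Instead, via the reduction lemma it passes to a minor of rank exactly $|F|+1$ on $2(|F|+1)$ elements in which there is a \emph{unique} $F$-avoiding basis $B$ and $A=E\setminus B$ is a basis. The key combinatorial device is then an \emph{ordering} of $A$ and $B$ and the interval swaps $\hat B_{i,j}$: if all prefix swaps $\hat B_{1,j}$ land in $F$, pigeonhole yields $\psi(\hat B_{1,k_1})=\psi(\hat B_{1,k_2})$, and then the telescoping identity forces $\psi(\hat B_{k_1+1,k_2})=\psi(B)\notin F$. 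Your bijection setup lacks this telescoping; with it, the matching labels only give $\psi(B')=\psi(B^*)+2\delta$, which can still be in $F$. This ordering/interval trick is what makes the SIBO case trivial and underlies both special proofs.

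\textbf{Sparse paving.} Your outline (``circuit-hyperplanes differ in at least two elements, so many multi-element swaps are bases, strengthened pigeonhole'') is too coarse. The paper's argument, after the reduction above, is a structural analysis of how the unique $F$-avoiding basis $B$ sits relative to the label classes $\psi^{-1}(g)$: if $|\psi(E)|\ge r+1$ a rainbow counting argument works directly; otherwise one shows that any $r$-set with label outside $F$ is (almost) a union of label classes, constructs orderings of $A$ and $B$ so that \emph{no} interval swap $\hat B_{i,j}$ is such a union, and then the telescoping lemma produces an $F$-avoiding $\hat B_{i,j}\ne B$, contradicting uniqueness. None of this label-class rigidity or the ordering construction appears in your sketch.

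\textbf{The case $|F|\le 4$.} Your plan is a hand case analysis on the multiset of increments $\psi(y)-\psi(\pi(y))$. The paper takes an entirely different route: after reducing to rank $\le 5$ on $\le 10$ elements, it verifies by SAT solver that every such matroid is SIBO except $R_{10}$, and then handles $R_{10}$ with a short ad hoc ordering argument exploiting its symmetry. Your proposed case analysis would have to confront all rank-$5$ matroids on $10$ elements implicitly, without the SIBO crutch; there is no indication this is tractable, and the paper explicitly avoids it.
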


It is clear that \cref{conj:proximity} implies an algorithm for finding an $F$-avoiding basis using $O((rn)^{|F|})$ independence oracle queries, where $r$ is the rank of $M$ and $n \coloneqq |E|$.
Another consequence of \cref{conj:proximity} is that the number of $F$-avoiding bases is at least $|\mathcal{B}|/\sum_{i=0}^{|F|} \binom{r}{i}\binom{n-r}{i}$, where $\mathcal{B}$ is the basis family of $M$, provided that at least one $F$-avoiding basis exists.

\Cref{conj:proximity} is known to hold if (i) $|F| \le 2$,  (ii) $\Gamma$ is an ordered group, (iii) $\Gamma$ has prime order, (iv) $|\Gamma \setminus F| = 1$ and $\Gamma$ is a cyclic group with the order being a prime power or the product of two primes, or (v) $M$ is strongly base orderable (SBO). The claim for $|F| = 1$ essentially follows from Rieder's characterization~\cite{rieder1991lattices} of basis lattices, and a simpler proof can be found in~\cite{horsch2024problems}.
The proof for $|F| = 2$ in~\cite{horsch2024problems} first reduces the problem to 6-element, rank-3 matroids and then shows the claim for them, treating one special matroid $M(K_4)$ separately.
Case (ii) was also proven in~\cite{horsch2024problems}, where an ordered group is a group equipped with a total order consistent with the group operation, such as the integers $\Z$ and the reals $\R$. 

\Cref{conj:proximity} for (iii) and (iv) was shown by Liu and Xu~\cite{liu2023congruency}.
They showed (iii) with $|\Gamma \setminus F| = 1$ using an additive combinatorics result by Schrijver and Seymour~\cite{schrijver1990spanning} (which is \cref{conj:schrijver-seymour} below for prime-order cyclic groups), and it immediately extends to general $F$.
For (iv), Liu and Xu observed more generally that \cref{conj:proximity} for finite $\Gamma$ with $|\Gamma \setminus F| = 1$ holds if the following long-standing conjecture by Schrijver and Seymour~\cite{schrijver1990spanning}\footnote{%
  The original conjecture by Schrijver and Seymour~\cite{schrijver1990spanning} is stated only for the case when the stabilizer subgroup $H$ is trivial.
  \Cref{conj:schrijver-seymour} is the form given in~\cite{devos2009generalization} and is obtained by applying the original one to a labeling $\psi'\colon E \to \Gamma / H$ defined by $\psi'(e) \coloneqq \psi(e) + H$ for each $e \in E$.
}
is met for every subgroup of $\Gamma$.

\begin{conjecture}[{Schrijver and Seymour~\cite{schrijver1990spanning}}; see also~\cite{devos2009generalization}]\label{conj:schrijver-seymour}
    Let $M$ be a matroid with ground set $E$, basis family $\mathcal{B}$, and rank function $\rho$.
    Let $\psi\colon E \to \Gamma$ be a labeling to an abelian group $\Gamma$
    and $H \coloneqq \Set{g \in \Gamma}{g + \psi(\mathcal{B}) = \psi(\mathcal{B})}$ the stabilizer subgroup of $\psi(\mathcal{B}) \coloneqq \Set{\psi(B)}{B \in \mathcal{B}}$.
    Then,
    \begin{align}
        |\psi(\mathcal{B})| \ge |H| \prn*{\sum_{Q \in \Gamma / H} \rho(\psi^{-1}(Q)) - \rho(E) + 1}.
    \end{align}
\end{conjecture}

Schrijver and Seymour~\cite{schrijver1990spanning} showed \cref{conj:schrijver-seymour} for prime-order cyclic groups, and
DeVos, Goddyn, and Mohar~\cite{devos2009generalization} proved it for the cases when $M$ is obtained from a uniform matroid by adding parallel elements and when $\Gamma$ is one of the groups in (iv).

Case (v) was shown in~\cite{liu2023congruency} for $|\Gamma \setminus F| = 1$ and in~\cite{horsch2024problems} for general $F$.
SBO matroids are a class of matroids that admit a certain basis exchange property and includes gammoids (so in particular,  uniform, partition, laminar, and transversal matroids); see, e.g.,~\cite[Section~42.6c]{schrijver2003combinatorial}.
As mentioned in the full version~\cite{horsch2024problems_arxiv} of~\cite{horsch2024problems}, the same proof works if we only assume a weaker property called \emph{subsequence-interchangeable base orderability}
(cf.~\cref{lem:pigeon}).
Following Baumgart~\cite{baumgart2009ranking},
we say that a rank-$r$ matroid is \emph{subsequence-interchangeably base orderable} (SIBO) if every pair of bases $A$ and $B$ admits orderings $a_1, \dotsc, a_r$ of $A$ and $b_1, \dotsc, b_r$ of $B$ such that $(B \setminus \{b_i, \dotsc, b_j\}) \cup \{a_i, \dotsc, a_j\} = \set{b_1, \dotsc, b_{i-1}, a_i, \dotsc, a_j, b_{j+1}, \dotsc, b_r}$ is a basis for any pair $(i, j)$ with $1 \le i \le j \le r$. 
For each pair of bases, we call such a pair of orderings an \emph{SI-ordering}.
Baumgart~\cite{baumgart2009ranking} posed the following conjecture.

\begin{conjecture} [{Baumgart~\cite{baumgart2009ranking}}]\label{conj:baumgart}
    Every graphic matroid is SIBO.
\end{conjecture}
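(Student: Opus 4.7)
My plan to establish \cref{conj:baumgart} is by induction on $|V(G)|$, equivalently on the rank $r$ of $M(G)$. The base case $r \le 1$ is immediate. If $T_A \cap T_B \ne \emptyset$, I would fix $e \in T_A \cap T_B$, set $a_1 = b_1 = e$, and apply the induction hypothesis to $(T_A - e, T_B - e)$ in the contraction $G/e$ to obtain an SI-ordering $a_2, \dotsc, a_r$ and $b_2, \dotsc, b_r$. A direct check shows that every interval swap $S_{i,j} \coloneqq \{b_1, \dotsc, b_{i-1}, a_i, \dotsc, a_j, b_{j+1}, \dotsc, b_r\}$ in $G$ either contains $e$ (and so lifts a spanning tree of $G/e$ to one of $G$) or equals $T_B$, so the SI-condition in $G$ follows. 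Hence one may assume $T_A$ and $T_B$ are edge-disjoint.

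In the edge-disjoint case, the natural attempt is to peel off a single exchange pair. I would pick a leaf $v$ of $T_A$ with edge $a_1 = uv$ and let $b_1$ be the unique edge of $T_B$ incident to $v$ that lies in the fundamental cycle $C(a_1, T_B)$, equivalently, the last edge of the $u$-$v$ path in $T_B$. Since $b_1 \in C(a_1, T_B)$, the set $T_B - b_1 + a_1$ is a spanning tree of $G$, so $T_A - a_1$ and $T_B - b_1$ are both spanning trees of $G/a_1$. I would then apply the induction hypothesis in $G/a_1$ and prepend $a_1, b_1$ to the resulting orderings to form a candidate SI-ordering for $(T_A, T_B)$.

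The main obstacle lies in the verification. For the intervals $S_{1, j}$ the check is straightforward: $\{a_2, \dotsc, a_j, b_{j+1}, \dotsc, b_r\}$ is a spanning tree of $G/a_1$, and adding $a_1 = uv$ reconnects the two components of its lift to $G$. For $S_{i, j}$ with $i \ge 2$, which contain $b_1 = vw$ instead of $a_1$, one needs the other endpoint $w$ of $b_1$ to lie in the $u$-side component of the lifted forest, and this is not automatic. Already the case $M(K_4)$ with $T_A = \{e_{12}, e_{23}, e_{34}\}$ and $T_B = \{e_{13}, e_{14}, e_{24}\}$ illustrates the phenomenon: after peeling off $a_1 = e_{12}$ and $b_1 = e_{14}$, neither of the valid SI-orderings of $(T_A - e_{12}, T_B - e_{14})$ in $G/e_{12}$ lifts to an SI-ordering in $G$, while the actual SI-ordering of $(T_A, T_B)$ beginning with $(e_{12}, e_{14})$ restricts to a non-SI-ordering in $G/e_{12}$, because $e_{13}$ and $e_{23}$ become parallel after contracting $e_{12}$.

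To overcome this, a strengthened induction is needed. One option is to enrich the inductive hypothesis with an extra invariant recording how the lifted swap sets interact with the contracted edge---essentially tracking on which side of the contracted edge each vertex lies in the lift. Another is to peel pairs simultaneously at both ends of the ordering using leaves of $T_A$ and $T_B$ together, or to abandon contraction in favour of an induction on $|T_A \symdif T_B|$ coupled with the strong exchange property for graphic matroids. Designing an invariant or canonical reduction that is preserved under the chosen operation is, in my view, the crux of the conjecture and the reason it has so far resisted proof.
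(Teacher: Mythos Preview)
The statement you are attempting to prove is \cref{conj:baumgart}, which is stated in the paper as an \emph{open conjecture} due to Baumgart; the paper does not prove it and in fact uses it only as context (noting that it would imply \cref{conj:proximity} for graphic matroids, and that the paper's \cref{thm:R10notSIBO} shows it cannot extend to regular matroids). So there is no ``paper's own proof'' to compare against.

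Your proposal is not a proof either, and to your credit you say so explicitly. The inductive strategy you outline---peel off a leaf edge $a_1$ of $T_A$ together with a suitable $b_1 \in C(a_1,T_B)$, contract, and recurse---is the natural first attempt, and your $K_4$ example is exactly the right obstruction: the SI-ordering in $G/a_1$ need not lift because after contraction two edges can become parallel, so the lifted interval swap $S_{i,j}$ with $i \ge 2$ may fail to be a tree in $G$. Your diagnosis that a strengthened inductive invariant (or a different reduction) is required is accurate, but you do not supply one, and the suggestions you list (tracking sides of the contracted edge, peeling at both ends, induction on $|T_A \symdif T_B|$) are only sketches of directions, each of which runs into similar lifting issues. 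In short, the gap is precisely the one you identify: there is no known way to make the induction close, and the conjecture remains open.
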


By case (v) above, \cref{conj:baumgart} would imply \cref{conj:proximity} for graphic matroids. Let us note that \cref{conj:baumgart} is a strengthening of the graphic matroid case of the following celebrated conjecture.

\begin{conjecture}[{Gabow~\cite{gabow1976decomposing}, see also \cite{wiedemann1984cyclic, cordovil1993bases}}]\label{conj:gabow}
    Let $A$ and $B$ be bases of a rank-$r$ matroid $M$.
    Then, there are orderings $a_1, \dotsc, a_r$ of $A$ and $b_1, \dotsc, b_r$ of $B$ such that $\set{a_1, \dotsc, a_i, b_{i+1}, \dotsc, b_r}$ and $\set{b_1, \dotsc, b_i, a_{i+1}, \dotsc, a_r}$ are bases for any $i = 1, \dotsc, r$.
\end{conjecture}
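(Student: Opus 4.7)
I would attempt induction on the rank $r$, the base case $r=1$ being trivial. For the inductive step, the natural first move is to use the symmetric basis exchange property to pick $a_1 \in A$ and $b_1 \in B$ such that both $A - a_1 + b_1$ and $B - b_1 + a_1$ are bases of $M$. Declaring these as the first elements of the two orderings automatically satisfies the $i=1$ requirements, regardless of how the remaining elements are ordered. One then wishes to recurse on $A' \coloneqq A \setminus \{a_1\}$ and $B' \coloneqq B \setminus \{b_1\}$ in a suitable smaller matroid.

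The key rewriting enabling recursion is that, for $i \ge 1$, the set $\{a_1, a_2, \dotsc, a_i, b_{i+1}, \dotsc, b_r\}$ is a basis of $M$ if and only if $\{a_2, \dotsc, a_i, b_{i+1}, \dotsc, b_r\}$ is a basis of the contraction $M/a_1$; symmetrically, the ``$b$-prefix'' conditions $\{b_1, b_2, \dotsc, b_j, a_{j+1}, \dotsc, a_r\} \in \mathcal{B}(M)$ become basis conditions in $M/b_1$. Because $A'$ and $B'$ are bases of both $M/a_1$ and $M/b_1$, each of the two families of conditions, taken in isolation, becomes a lower-rank instance of Gabow's conjecture, so invoking the induction hypothesis twice would finish the proof --- if only the two resulting pairs of orderings could be made to agree.

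The main obstacle is exactly this compatibility requirement: the two sub-instances live in \emph{different} matroids, $M/a_1$ and $M/b_1$, yet the conjecture demands a \emph{single} pair of orderings that works for both simultaneously. Naive induction provides no control over alignment, and this tension is essentially why Gabow's conjecture has resisted resolution since 1976. Genuine progress would require either a strengthened inductive hypothesis that couples the two contractions, a global reformulation (for instance, via matroid union, or as the existence of a suitable pair of paths in the basis exchange graph), or an argument tailored to a specific matroid class --- the statement is already known for strongly base orderable matroids and for matroids of small rank. A realistic intermediate milestone would be to verify the conjecture on the first matroid known not to be SIBO, whose construction is one of the contributions of the present paper.
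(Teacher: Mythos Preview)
The statement you were asked to prove is \emph{Gabow's conjecture}, which remains open in general; the paper does not prove it either --- it is quoted only as background and motivation (and the paper notes it is known for graphic and, more generally, regular matroids). So there is no ``paper's own proof'' against which to compare your attempt.

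That said, your analysis of why the obvious induction breaks is accurate and well articulated: after fixing a symmetric exchange pair $(a_1,b_1)$, the two families of prefix/suffix conditions live in the two different contractions $M/a_1$ and $M/b_1$, and the inductive hypothesis gives you no way to force the resulting orderings to coincide. This is precisely the reason the conjecture has remained open, and your write-up identifies the crux correctly. One small correction: you suggest verifying the conjecture on the first known non-SIBO matroid as a milestone, but in fact the paper already records that $R_{10}$ --- the non-SIBO matroid it exhibits --- is known to satisfy Gabow's conjecture (this is cited from \cite{berczi2023reconfiguration}), and indeed \cref{prop:sat} implies Gabow's conjecture up to rank~5. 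So that particular milestone has been passed; the hard cases lie elsewhere.
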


In contrast to \cref{conj:baumgart}, \cref{conj:gabow} is known to hold for graphic~\cite{wiedemann1984cyclic, kajitani1988ordering, cordovil1993bases}, and more generally for regular matroids~\cite{berczi2023reconfiguration}.

\paragraph{Our contributions.}
Our first main result is a proof of \cref{conj:proximity} for sparse paving matroids.
A rank-$r$ matroid is \emph{paving} if every circuit is of size either $r$ or $r+1$, and is \emph{sparse paving} if it and its dual are both paving.
Sparse paving matroids are significant in matroid theory as they have been used in hardness proofs for several algorithmic problems~\cite{doronarad2024lower,horsch2024problems_arxiv,jensen1982complexity,lovasz1981matroid} as well as counterexamples of conjectures.
In fact, sparse paving matroids were used in \cite{horsch2024problems} to disprove a strengthening of \cref{conj:proximity} for $|\Gamma \setminus F| = 1$ posed in the initial preprint version~\cite{liu2023congruencyarxiv} of \cite{liu2023congruency}.
Given this context, our positive result for sparse paving matroids provides additional evidence for \cref{conj:proximity}.
Moreover, since it is believed that asymptotically almost all matroids are sparse paving~\cite{mayhew2011on}, our result would imply that \cref{conj:proximity} holds in an asymptotic sense.

We also show \cref{conj:proximity} for $|F| \le 4$ using a computer-aided proof.
First, we use an observation from \cite{horsch2024problems_arxiv} (see \cref{lem:reduction}) to reduce to the case of matroids on at most 10 elements having rank at most 5.
By checking all such matroids using a SAT solver, it turns out that all of them are SIBO, except for a single matroid called $R_{10}$. We complete the proof by showing \cref{conj:proximity} for $R_{10}$ separately. For completeness, we also provide an elementary proof that $R_{10}$ is not SIBO. This is noteworthy, as it serves as the first example of a non-SIBO matroid and shows that \cref{conj:baumgart} does not extend to regular matroids.

As the second main thread of the paper, 
we consider an analog of \cref{conj:proximity} for multiple group labelings.
In this setting, given $k$ labelings $\psi_1, \dotsc, \psi_k$, where each $\psi_i$ is a map from $E$ to an abelian group $\Gamma_i$, and $k$ forbidden labels $f_1 \in \Gamma_1, \dotsc, f_k \in \Gamma_k$, we are to find a basis $B$ such that $\psi_i(B) \ne f_i$ for all $i \in \set{1, \dotsc, k}$.
Questions with similar constraints have also been studied for paths and cycles in graphs~\cite{huynh2019unified,gollin2022unifiederdhosposatheoremcycles,gollin2024unified,chekan2024half}.
Note that the $\psi_1 = \dotsb = \psi_k$ case corresponds to the single $F$-avoiding constraint with $F = \set{f_1, \dotsc, f_k}$.
We pose the following conjecture.

\begin{conjecture}[{Multi-Labeled Proximity Conjecture}] \label{conj:multiple} 
There is a computable function $d\colon \N \to \N$ such that for each $k \in \N$, matroid $M$ with ground set $E$, group labelings $\psi_i \colon E \to \Gamma_i$, group elements  $f_i \in \Gamma_i$ for $i = 1, \dotsc, k$, and basis $A$ of $M$, there exists a basis $B$ of $M$ with $\psi_i(B) \ne f_i$ for $i = 1, \dotsc, k$ and $|A\setminus B| \le d(k)$, provided that at least one such basis exists.
\end{conjecture}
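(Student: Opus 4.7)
The plan is to approach \cref{conj:multiple} by induction on $k$, combining a reduction to a single-labeling problem with the single-label Proximity Conjecture (\cref{conj:proximity}). The base case $k = 1$ follows from the $|F| = 1$ case of \cref{conj:proximity}, which holds unconditionally with $d(1) = 1$.

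The natural first attempt is a direct reduction to the single-label setting: consider the combined labeling $\Psi \coloneqq (\psi_1, \dotsc, \psi_k) \colon E \to \Gamma_1 \times \dotsb \times \Gamma_k$ and the forbidden set $F \coloneqq \bigcup_{i=1}^k \Set{(\gamma_1, \dotsc, \gamma_k)}{\gamma_i = f_i}$, so that a basis $B$ avoids all $k$ constraints exactly when $\Psi(B) \notin F$. Since only finitely many labels $\Psi(B)$ are achievable by bases within any bounded distance of $A$, one can replace each $\Gamma_i$ by a suitable finite quotient to make $F$ finite. If \cref{conj:proximity} held in full generality, applying it to $\Psi$ would immediately yield a proximity bound of $|F|$; however, after the reduction $|F|$ may still depend on parameters other than $k$, and \cref{conj:proximity} itself remains open in general. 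A more promising route is a genuine induction: invoke the hypothesis for $k - 1$ to find a basis $B'$ close to $A$ satisfying constraints $1, \dotsc, k-1$, and then attempt to patch $\psi_k(B') = f_k$ via the $|F| = 1$ case of \cref{conj:proximity} applied to $\psi_k$.

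The main obstacle is the interaction between constraints: patching the $k$-th constraint by a single exchange may reintroduce one of the earlier violations, and the family of bases satisfying a fixed subset of constraints is not a matroid, so standard exchange and additive-combinatorial tools do not transfer directly. Overcoming this appears to require a genuinely multi-labeled extension of the Schrijver--Seymour conjecture (\cref{conj:schrijver-seymour}) for the product group $\Gamma_1 \times \dotsb \times \Gamma_k$, namely a rank-type lower bound on $|\Psi(\mathcal{B})|$ strong enough to force some achievable label tuple $\Psi(B)$ with $|A \setminus B| \le d(k)$ to avoid every hyperplane $\gamma_i = f_i$ simultaneously. In the absence of such a general tool, one must settle for the restricted settings addressed in this paper: two labelings (handled by a direct product argument), SIBO matroids (via a pigeonhole-style exchange argument extending case (v) of the single-label proof), matroids representable over a fixed finite field (via algebraic tools), and sparse paving matroids (via a combinatorial analysis analogous to the single-label case).
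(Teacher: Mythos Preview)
The statement is a \emph{conjecture}, not a theorem; the paper does not prove it in general but poses it as open and then establishes it only for the restricted classes listed at the end of your proposal. In that sense your overall conclusion---that the general problem is open and one must settle for special cases---is correct and matches the paper.

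However, your sketch of \emph{how} the special cases are handled is off in a way worth noting. The paper does not use a ``direct product argument'' for $k=2$, ``algebraic tools'' for the $\GF(q)$ case, or a combinatorial analysis parallel to the single-label sparse paving proof. Instead, all four special cases go through a single unifying lemma (\cref{lem:multiple} in the paper): given disjoint nonempty pieces $X_1,\dots,X_\ell \subseteq B$ and $Y_1,\dots,Y_\ell \subseteq E\setminus B$ such that every ``interval swap'' $B_{i,j}=(B\setminus(X_i\cup\dots\cup X_j))\cup(Y_i\cup\dots\cup Y_j)$ is a basis, a nested pigeonhole argument shows that if $\ell \ge \lfloor (e-1/2)k!\rfloor$ then some $B_{i,j}$ avoids all $k$ forbidden values. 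This is proved by induction on $k$, and the $k=2$ case is sharpened by an ad hoc argument to $\ell\ge 4$, which combined with the fact that all rank-$4$ matroids are SIBO gives the tight bound $d(2)=3$. The SIBO, $\GF(q)$-representable, and sparse paving cases are then all deduced by showing that such interval-exchangeable families $X_i,Y_i$ exist (via SI-orderings, weak base orderability from~\cite{horsch2024problems}, and the existence of large uniform $B$-minors, respectively). So the engine is not an extension of Schrijver--Seymour or an algebraic argument, but a quantitative strengthening of the single-label pigeonhole step in \cref{lem:pigeon}, applied once a suitable exchange structure is in hand.
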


As a lower bound, we show using uniform matroids 
that $d(k)$, if exists, must be at least $2^k - 1$.
\Cref{conj:proximity} for $|F| = 1$ implies that this is tight if $k=1$.
For $k = 2$, we show that $2^2 - 1 = 3$ is tight.
We further show that $d(k) = \lfloor (e-1/2)k! \rfloor - 1$ suffices for SIBO matroids.
We combine an extension of this result with a result of \cite{horsch2024problems} (\cref{thm:weakly}) to show the existence of such a function $d(k)$ 
for matroids representable over a fixed, finite field.
Finally, we prove an analogous result for sparse paving matroids using a similar method, but relying on a new structural observation on sparse paving matroids instead of \cref{thm:weakly}.

\paragraph{Related work.}
Eisenbrand, Rohwedder, and W\k{e}grzycki \cite{eisenbrand2024sensitivity} showed a proximity result on basis pairs of integer-labeled matroids.
For a matroid labeled with $\mathbb{Z}_m$ for a positive integer $m\ge 2$, this result implies that if there exists a zero basis,
then for any basis $A$, there exists a zero basis $B$ such that $|A\setminus B| = O(m^5)$.
This bound is weaker than the bound $|A\setminus  B|\le m-1$ implied by \cref{conj:proximity}.
Their result also implies a bound $|A\setminus B| = |\Gamma|^{O(\log |\Gamma|)}$ for a matroid labeled with a finite abelian group $\Gamma$.
The paper~\cite{eisenbrand2024sensitivity} additionally provided an FPT algorithm for finding a zero basis of a matroid labeled with a finite abelian group when parameterized by group size.

Hörsch, Imolay, Mizutani, Oki, and Schwarcz~\cite{horsch2024problems} also posed a weighted variant of \cref{conj:proximity}, which was settled for SBO matroids as well as the case when $|F| = 1$~\cite{horsch2024problems}.
Extending the proofs of \cref{conj:proximity} in other cases to the weighted conjecture is left for future work.

\paragraph{Organization.}
The rest of this paper is organized as follows.
\Cref{sec:preliminaries} describes preliminaries.
\Cref{sec:sparse-paving,sec:four-forbidden} prove \cref{conj:proximity} for sparse paving matroids and the case when $|F| \le 4$, respectively.
\cref{sec:multiple-label} gives proximity results in the setting of multiple labelings.
Finally, in \cref{sec:conclusion}, we conclude the paper with several open questions.

\section{Preliminaries}\label{sec:preliminaries}

For a nonnegative integer $k$ and a set $S$, let $[k] \coloneqq \set{1, \dotsc, k}$ and $\binom{S}{k} \coloneqq \Set{X \subseteq S}{|X| = k}$.
For a set $S$, $x \notin S$, and $y \in S$, we abbreviate $S \cup \set{x}$ as $S + x$ and $S \setminus \set{y}$ as $S - y$.
All groups are implicitly assumed to be abelian.
We use the additive notation for the group operation.
Let $\Z_m$ be the cyclic group of order $m$.
For a prime power $q$, let $\GF(q)$ be the finite field of size $q$.

We refer the readers to \cite{oxley2011matroid} for basic concepts and terminology in matroid theory. 
A matroid $M$ consists of a finite ground set $E(M)$ and a nonempty set family $\mathcal{B}(M)$ such that for any $B, B' \in \mathcal{B}(M)$ and $e \in B \setminus B'$, there exists $f \in B' \setminus B$ such that $B - e + f \in \mathcal{B}(M)$.
Every element in $\mathcal{B}(M)$ is called a \emph{basis} (or a \emph{base}).
The \emph{rank} of $M$ is the size of any basis of $M$. 
The \emph{dual} $M^*$ of $M$ is a matroid on the same ground set defined by $\mathcal{B}(M^*) = \Set{E(M) \setminus B}{B \in \mathcal{B}(M)}$.
For $X \subseteq E(M)$, the \emph{restriction} of $M$ to $X$, denoted by $M|X$, is a matroid on $X$ with $\mathcal{B}(M|X) = \Set{B' \in \binom{X}{r'}}{B' \subseteq B \ (\exists B \in \mathcal{B}(M))}$, where $r' \coloneqq \max_{B \in \mathcal{B}(M)} |B \cap X|$.
Also, the \emph{contraction} of $M$ by $X$ is a matroid $M/X \coloneqq (M^* | (E(M) \setminus X))^*$.
A matroid $M'$ is a \emph{minor} of $M$ if $M' = (M|X) / Y$ for some $X, Y$ with $Y \subseteq X \subseteq E(M)$.
Two matroids $M_1$ and $M_2$ are \emph{isomorphic} if there exists a bijection $\sigma \colon E(M_1) \to E(M_2)$ such that $\mathcal{B}(M_2) = \Set{\Set{\sigma(e)}{e \in B}}{B \in \mathcal{B}(M_1)}$.

A matroid $M$ of rank $r$ is called \emph{uniform} if $\mathcal{B}(M)=\binom{E(M)}{r}$; it is denoted by $U_{r, n}$ up to isomorphism, where $n = |E(M)|$.
A matroid $M$ is called \emph{$\mathbb{F}$-representable} if for some matrix $A$ over a field $\mathbb{F}$, $E(M)$ corresponds to the set of columns of $A$ and $\mathcal{B}(M)$ consists of the subsets of columns of $A$ each of which forms a basis of the vector space spanned by the columns of $A$.
We will use the following characterizations of paving and sparse paving matroids.
A matroid $M$ of rank $r$ is paving if and only if there exists a collection $\mathcal{H} = \set{H_1, \dotsc, H_k}$ of subsets of $E(M)$ such that $|H_i| \ge r$ for each $i \in [k]$, $|H_i \cap H_j| \le r - 2$ if $i \ne j$, and $\mathcal{B}(M) = \Set{B \in \binom{E(M)}{r}}{B \not\subseteq H_i \ (\forall i \in [k])}$ (cf.~\cite[Theorem~5.3.5]{frank2011connections}).
Also, $M$ is sparse paving if and only if there is such a representation with $|H_i| = r$ for each $i \in [k]$; in this case, $\mathcal{B}(M) = \binom{E(M)}{r} \setminus \mathcal{H}$ (cf.~\cite[Lemma~2.1]{bonin2013basis}). Note that the class of sparse paving matroids is minor-closed, since it is closed under taking restrictions and duals.

In an indirect approach to \cref{conj:proximity}, the following lemma is useful. It is similar to \cite[Corollary 4.4]{liu2023congruency} and is obtained from \cite[Lemmas~5.35 and 5.36]{horsch2024problems_arxiv}. 

\begin{lemma}[see~\cite{horsch2024problems_arxiv}] \label{lem:reduction}
Let $M$ be a matroid, $\psi\colon E(M)\to \Gamma$ a group labeling, and $F\subseteq \Gamma$ a finite set of forbidden labels.
Assume that $(M, \psi, F)$ is a counterexample to \cref{conj:proximity}, i.e., $M$ has an $F$-avoiding basis and it has a basis $A$ with $|A\setminus B|\ge |F|+1$ for any $F$-avoiding basis $B$.
Then, there exists a minor $M'$ of $M$ having rank $|F|+1$, a labeling $\psi'\colon E(M)\to \Gamma$, a set of labels $F'\subseteq \Gamma$ with $|F'| = |F|$, and a basis $B'$ of $M$ such that $B'$ is the only $F'$-avoiding basis of $M'$ and $E(M')\setminus B'$ is a basis.
\end{lemma}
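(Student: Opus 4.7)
The plan is to first pass from the counterexample $(M, \psi, F)$ to a minor $M''$ living on the symmetric difference of $A$ and a carefully chosen $F$-avoiding basis $B^*$, and then reduce its rank via a multiple-exchange-type minor operation to obtain $M'$. The key insight driving uniqueness of $B'$ is that choosing $B^*$ to minimize $|A \setminus B^*|$ forces every $F''$-avoiding basis of the intermediate minor $M''$ to equal $B^* \setminus A$.

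Concretely, first pick an $F$-avoiding basis $B^*$ of $M$ minimizing $|A \setminus B^*|$; this quantity is $\ge |F|+1$ by the counterexample hypothesis. Form $M'' \coloneqq (M|(A \cup B^*)) / (A \cap B^*)$ on the ground set $A_1 \cup B_1$, where $A_1 \coloneqq A \setminus B^*$ and $B_1 \coloneqq B^* \setminus A$, equipped with the restricted labeling $\psi'' \coloneqq \psi|_{A_1 \cup B_1}$ and the shifted forbidden set $F'' \coloneqq F - \psi(A \cap B^*)$. Then $A_1$ and $B_1$ are disjoint complementary bases of $M''$, and the bases of $M''$ correspond (via $B \mapsto B \cup (A \cap B^*)$) to bases of $M$ inside $A \cup B^*$ that contain $A \cap B^*$, with $F''$-avoidance matching $F$-avoidance. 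If $B$ is any $F''$-avoiding basis of $M''$, then $|A_1 \setminus B| = |A \setminus (B \cup (A \cap B^*))| \ge |A \setminus B^*| = |A_1|$ by minimality, forcing $A_1 \cap B = \emptyset$ and hence $B = B_1$. Thus $B_1$ is the unique $F''$-avoiding basis of $M''$.

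If $|A_1| = |F|+1$, we simply take $M' = M''$, $\psi' = \psi''$, $F' = F''$, and $B' = B_1$. Otherwise, set $r'' \coloneqq |A_1| > |F|+1$ and pick any $S \subseteq A_1$ with $|S| = r'' - (|F|+1)$. Since $A_1 \setminus S$ is independent and $(A_1 \setminus S) \cup B_1$ is spanning in $M''$, the standard matroid extension property yields $C \subseteq B_1$ with $|C| = |S|$ such that $(A_1 \setminus S) \cup C$ is a basis of $M''$. Define $M' \coloneqq (M''/C) \setminus S$ with $\psi' \coloneqq \psi|_{E(M')}$ and $F' \coloneqq F'' - \psi(C)$; then $|F'| = |F|$ and $M'$ has rank $|F|+1$. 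A direct verification shows that $A' \coloneqq A_1 \setminus S$ and $B' \coloneqq B_1 \setminus C$ are disjoint complementary bases of $M'$, and that the bases of $M'$ biject with the bases of $M''$ that contain $C$ and are disjoint from $S$. Because the unique $F''$-avoiding basis $B_1$ of $M''$ satisfies $C \subseteq B_1$ and $S \cap B_1 = \emptyset$, its image $B_1 \setminus C = B'$ is the unique $F'$-avoiding basis of $M'$, while $E(M') \setminus B' = A'$ is a basis, as required.

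The conceptual heart of the argument is the minimality step yielding uniqueness of the $F''$-avoiding basis in $M''$; this is the one place where the counterexample hypothesis is genuinely used. The subsequent rank reduction is then engineered around this uniqueness by contracting only inside $B_1$ and deleting only inside $A_1$, so that the unique $F''$-avoiding basis $B_1$ survives into $M'$ and remains unique. The main potential pitfall is the opposite choice (contracting inside $A_1$ or deleting inside $B_1$), which would destroy the unique $F$-avoiding basis; once the correct interleaving is fixed, the existence of the exchange set $C$ is nothing more than a single application of basis extension.
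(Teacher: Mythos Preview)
Your argument is correct and is precisely the standard two-step reduction the paper defers to (it cites \cite[Lemmas~5.35 and 5.36]{horsch2024problems_arxiv} rather than reproving it): first pass to the closest $F$-avoiding basis $B^*$ and form the minor on $A\symdif B^*$ to force uniqueness, then contract inside $B_1$ and delete inside $A_1$ to cut the rank to $|F|+1$ without disturbing that unique $F$-avoiding basis. Every step checks out, including the shift of the forbidden set under contraction and the use of basis extension to pick $C$.
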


The following observation was essentially noted in \cite[Remark~5.16]{horsch2024problems_arxiv}; we include a proof for completeness.

\begin{lemma}[see~\cite{horsch2024problems_arxiv}]\label{lem:pigeon}
    Let $E$ be a finite set, $\psi\colon E\to \Gamma$ a group labeling, $F\subseteq \Gamma$ a finite collection of labels, and $A=\set{a_1,\dots, a_r}$ and $B=\set{b_1,\dots, b_r}$ disjoint subsets of $E$, where $r=|F|+1$.
    If $B$ is $F$-avoiding, then there exists a pair $(i,j)$ with $1 \le i \le j \le r$ and $(i,j) \ne (1,r)$ such that $\hat{B}_{i,j}\coloneqq (B \setminus \set{b_i, \dotsc, b_j}) \cup \set{a_i, \dotsc, a_j} = \set{b_1,\dotsc, b_{i-1}, a_{i},\dotsc, a_j, b_{j+1},\dotsc, b_r}$ is $F$-avoiding.
    Furthermore, if $\psi(\hat{B}_{1,j}) \in F$ for every $j \ge 1$, then there exists a pair with $i > 1$ such that $\psi(\hat{B}_{i,j}) = \psi(B)$.
\end{lemma}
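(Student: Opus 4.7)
The plan is to reduce the entire lemma to a pigeonhole argument applied to the partial sums
\[
s_j \coloneqq \psi(\hat B_{1,j}) \qquad (j = 0, 1, \dotsc, r),
\]
with the convention $s_0 = \psi(B)$. Telescoping $\psi(a_k) - \psi(b_k)$ over $k = i, \dotsc, j$ gives the key identity
\[
\psi(\hat B_{i,j}) = s_0 + s_j - s_{i-1} \qquad \text{for all } 1 \le i \le j \le r,
\]
which relates every subsequence-swap set $\hat B_{i,j}$ to the partial sums $s_j$.

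I would first establish the \emph{furthermore} part. Under its hypothesis the $r$ values $s_1, \dotsc, s_r$ all lie in the set $F$, which has cardinality $|F| = r - 1$, so by the pigeonhole principle there exist indices $1 \le j < j' \le r$ with $s_j = s_{j'}$. Setting $i \coloneqq j + 1 \ge 2$, the key identity yields
\[
\psi(\hat B_{i, j'}) = s_0 + s_{j'} - s_j = s_0 = \psi(B),
\]
which is exactly the required pair.

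For the main statement I would split on whether some prefix sum already escapes $F$. If $s_j \notin F$ for some $j \in \{1, \dotsc, r - 1\}$, then $(1, j) \ne (1, r)$ and $\hat B_{1, j}$ is $F$-avoiding, so we are done directly. Otherwise all of $s_1, \dotsc, s_{r-1}$ lie in $F$, and the furthermore's hypothesis is within reach: applying its pigeonhole to $s_1, \dotsc, s_r$ produces a pair $(i, j)$ with $i > 1$ (in particular $(i, j) \ne (1, r)$) and $\psi(\hat B_{i, j}) = \psi(B) \notin F$, giving $F$-avoidance.

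I do not anticipate any serious obstacle; after the telescoping identity, the whole argument is a one-step pigeonhole. The only point requiring a small amount of care is the case split for the main claim, where one must observe that the pigeonhole output automatically satisfies $i = j + 1 \ge 2$ and hence $(i, j) \ne (1, r)$ as required.
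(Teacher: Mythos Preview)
Your argument is the paper's argument: compute the prefix sums $s_j=\psi(\hat{B}_{1,j})$, observe the telescoping identity $\psi(\hat{B}_{i,j})=s_0+s_j-s_{i-1}$, and pigeonhole $s_1,\dots,s_r$ into $F$ of size $r-1$ to force a collision $s_{k_1}=s_{k_2}$, yielding $\psi(\hat{B}_{k_1+1,k_2})=\psi(B)$.

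One point deserves more care than ``within reach'': in your Otherwise branch you have only established $s_1,\dots,s_{r-1}\in F$, not $s_r\in F$, so you cannot yet pigeonhole all of $s_1,\dots,s_r$. The paper's written proof has the same loose end in mirror image (its first case allows $k=r$, which would give the excluded pair $(1,r)$). In fact the first clause of the lemma is false without the extra hypothesis $\psi(A)\in F$: take $r=2$, $\Gamma=\Z$, $F=\{1\}$, $\psi(b_1)=\psi(b_2)=0$, $\psi(a_1)=\psi(a_2)=1$; then both $\hat{B}_{1,1}$ and $\hat{B}_{2,2}$ have label $1$ while $\psi(B)=0\notin F$. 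Every application of the lemma in the paper has $\psi(A)\in F$, and under that assumption both your proof and the paper's go through verbatim.
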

\begin{proof}
If there exists $1 \le k \le r$ such that $\psi(\hat{B}_{1,k}) \notin F$, then $(i, j) = (1, k)$ is indeed a desired pair.
Otherwise, as $|F| = r-1$, there exists a pair $(k_1, k_2)$ by the pigeonhole principle such that $1 \le k_1 < k_2 \le r$ and $\psi(\hat{B}_{1,k_1}) = \psi(\hat{B}_{1,k_2})$.
We then have $\psi(\hat{B}_{k_1+1,k_2}) = \psi(B) \notin F$, which means that the pair $(i, j) = (k_1 + 1, k_2)$ is a desired one.
\end{proof}

\section{Proximity Theorem for Sparse Paving Matroids}\label{sec:sparse-paving}
In this section, we prove the following theorem.

\begin{theorem}\label{thm:proximity-sparse-paving}
    \cref{conj:proximity} is true when $M$ is a sparse paving matroid.
\end{theorem}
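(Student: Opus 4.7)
I will argue by contradiction. Suppose $(M, \psi, F)$ is a counterexample with $M$ sparse paving. Applying \cref{lem:reduction} and using that the class of sparse paving matroids is closed under taking minors, we may assume $M$ is sparse paving of rank $r = |F|+1$ on $|E(M)| = 2r$ elements, $B$ is the unique $F$-avoiding basis, and $A \coloneqq E(M) \setminus B$ is also a basis (so $|F| = r-1$). Since $A \ne B$ is a basis, uniqueness of $B$ forces $\psi(A) \in F$.

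The crucial sparse paving property is that any two distinct non-bases $H, H' \in \mathcal{H}$ satisfy $|H \cap H'| \le r-2$. In particular, the single-element swap non-bases $\{B - b + a : b \in B,\ a \in A\} \cap \mathcal{H}$ form a partial matching on $B \times A$. Using this, I construct by induction on $r$ a basis exchange path $B = B_0, B_1, \dotsc, B_r = A$ in $M$, where each $B_k$ is a basis and $B_k = B_{k-1} - b_k + a_k$ for some $b_k \in B$, $a_k \in A$. At the first step, the partial matching property lets us pick $b_1 \in B$, $a_1 \in A$ with $B - b_1 + a_1$ a basis; the remainder of the path comes from applying the inductive hypothesis to the sparse paving minor $M / a_1 \setminus b_1$, which has rank $r-1$ on $2(r-1)$ elements and complementary bases $B - b_1$ and $A - a_1$. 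By construction, $b_1, \dotsc, b_r$ and $a_1, \dotsc, a_r$ are orderings of $B$ and $A$, respectively.

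With these orderings, set $d_k \coloneqq \psi(a_k) - \psi(b_k)$ and $S_j \coloneqq \sum_{k=1}^j d_k$, so that $\psi(B_j) = \psi(B) + S_j$. Since each $B_j$ for $j \ge 1$ is a basis distinct from $B$, uniqueness of $B$ yields $S_j \in F'' \coloneqq F - \psi(B)$ for $j = 1, \dotsc, r$. Because $|F''| = r-1$, the pigeonhole principle applied to the $r$ values $S_1, \dotsc, S_r$ produces indices $j_1 < j_2$ with $S_{j_1} = S_{j_2}$. The sandwich set $X \coloneqq \hat{B}_{j_1+1, j_2}$ then has label $\psi(X) = \psi(B) + S_{j_2} - S_{j_1} = \psi(B)$, so $X$ is $F$-avoiding; and $X \ne B$ since $a_{j_1+1} \in X$. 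If $X$ is a basis, this yields an $F$-avoiding basis distinct from $B$, contradicting uniqueness and completing the proof.

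The remaining case, where $X \in \mathcal{H}$, is the technical heart of the argument and will be the main obstacle. Here I plan to exploit the flexibility in the path construction: the partial matching property leaves multiple valid choices of $(b_k, a_k)$ at each inductive step, and by systematically modifying the path -- via alternative initial swaps or reorderings of subsequent swap elements -- the pigeonhole argument produces a whole family of candidate sandwich sets in $\mathcal{H}$ all with label $\psi(B)$. The goal is to show that either some modification yields a candidate that is a basis (contradicting uniqueness of $B$), or two of these candidates in $\mathcal{H}$ intersect in $r-1$ elements, directly contradicting the sparse paving condition. Carrying this out as a clean case analysis organized by the block length $j_2 - j_1$ from the pigeonhole step, with separate treatments for short blocks (where the candidate is close to a single-swap set that the path construction was designed to avoid) and longer blocks (where the multiplicity of candidates can be leveraged), is where the bulk of the technical work is expected to lie.
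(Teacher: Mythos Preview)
Your reduction via \cref{lem:reduction} and the pigeonhole argument are correct and match the paper's use of \cref{lem:pigeon}; the basis exchange path you construct does exist for sparse paving matroids (this is one direction of \cref{conj:gabow}, known for this class~\cite{bonin2013basis}), so your inductive construction is sound.

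The gap is that the ``technical heart'' you defer --- the case where the sandwich set $X$ lands in $\mathcal{H}$ --- is precisely where all the content lies, and your plan is not a proof. Nothing guarantees that modifying the basis path produces enough distinct candidates in $\mathcal{H}$ to force an $(r-1)$-intersection: different paths may collapse to the same $X$, or yield candidates that are far apart in $\mathcal{H}$, and your proposed case split on the block length $j_2 - j_1$ has no visible interaction with the sparse paving constraint. As written this is a hope, not an argument.

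The paper avoids this obstacle entirely by a label-based route. It first observes (\cref{lem:sparse-paving-union-lemma}) that an $F$-avoiding $r$-set $X \ne B$ which is \emph{not} a basis must, up to a one-element exception, be a union of label classes $\psi^{-1}(g)$: indeed, any swap $X - e + f$ with $\psi(e) = \psi(f)$ is a basis of the same label and so must equal $B$. Then, when $|\psi(E)| \le r$, it constructs orderings of $A$ and $B$ --- not a basis path, just sequences --- such that \emph{no} sandwich set $\hat{B}_{i,j}$ is a union of label classes (\cref{lem:coloring-ordering}). Applying \cref{lem:pigeon} to these orderings yields an $F$-avoiding $\hat{B}_{i,j}$ that cannot match the structure of a non-basis, hence is a basis distinct from $B$, the desired contradiction. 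The remaining case $|\psi(E)| \ge r+1$ is dispatched by a short rainbow argument. So rather than building an arbitrary path and repairing afterwards, the paper front-loads a structural characterization of $F$-avoiding non-bases and then tailors the orderings to avoid them.
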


\begin{proof}
Suppose, to the contrary, that \cref{conj:proximity} does not hold for a sparse paving matroid $M$ on the ground set $E$, a labeling $\psi \colon E \to \Gamma$, and a forbidden label set $F \subseteq \Gamma$.
Then, as minors of sparse paving matroids are sparse paving,
we may assume by \cref{lem:reduction} that $M$ has rank $r = |F|+1$, it contains exactly one $F$-avoiding basis $B$, and $E\setminus B$ is also a basis (with $\psi(E \setminus B) \in F$).
Clearly, $|F| \ge 1$.

We first consider the case when $|\psi(E)| \ge r + 1$.
Suppose that $B$ is \emph{rainbow}, i.e., no two elements in $B$ have the same label.
Fix an element $e \in E \setminus B$ such that $B + e$ is still rainbow, and consider the sets $(E \setminus B) - e + f$ for $f \in B$.
Since $M$ is sparse paving, at least $r-1 = |F|$ of these $r$ sets are bases, and none of them is $F$-avoiding, as $B$ is the only $F$-avoiding basis.
Since $B + e$ is rainbow, these bases and $E \setminus B$, each of which is obtained by adding an element of $B + e$ to $(E \setminus B) - e$, have distinct labels, a contradiction.

Next, suppose that $B$ is not rainbow.
As there are $r + 1$ elements of different labels, we can take a set $X \subseteq E$ with $|X| = r$ and an element $e \in E \setminus X$ such that $X + e$ is rainbow.
We consider $r$ sets $X + e - f$ for $f \in X$ in addition to $X$ itself.
Since $M$ is sparse paving, at least $r = |F| + 1$ of these $r + 1$ sets are bases, and none of them is $B$ as they are all rainbow.
Thus, none of them is $F$-avoiding as $B$ is the only $F$-avoiding basis.
Since $X + e$ is rainbow, these bases have distinct labels, a contradiction.

From now on, let us assume $|\psi(E)| \le r$.
To complete the proof, we use the following lemmas.
For each $g \in \Gamma$, we call $\psi^{-1}(g) \coloneqq \psi^{-1}(\set{g}) = \Set{e \in E}{\psi(e) = g}$ the \emph{label class} of $g$.

\begin{lemma}\label{lem:sparse-paving-union-lemma}
    For any $X \subseteq E$ with $|X| = r$, $\psi(X) \notin F$, and $X \ne B$, one of the following holds:
    \begin{enumerate}[{label={\upshape (\arabic*)}}]
        \item $X$ is the union of label classes, or
        \item $|B \setminus X| = 1$, $B \symdif X$ is a label class, and $X \cap B$ is the union of label classes.\label{item:sparse-paving-union-lemma-2}
    \end{enumerate}
\end{lemma}

\begin{proof}
    Assume that $X$ is not the union of label classes, i.e., there exists a pair of $e \in X$ and $f \notin X$ with $\psi(e) = \psi(f)$.
    Now $X$ is not a basis as $B$ is the only $F$-avoiding basis.
    Since $M$ is sparse paving, $X - e + f$ is a basis and $\psi(X - e + f) = \psi(X) \notin F$, which implies that $X - e + f = B$.
    This holds for any pair $(e, f)$ and the pair $(e, f)$ is indeed unique as $\{e, f\} = X \symdif B$, completing
    the proof.
\end{proof}

\begin{lemma}\label{lem:sparse-paving-union-lemma-2}
    One of the following holds:
    \begin{enumerate}[{label={\upshape (\arabic*)}}]
        \item $B$ is the union of label classes, or
        \item there exists $g \in \Gamma$ such that $|\psi^{-1}(g) \cap B| = |\psi^{-1}(g) \cap (E \setminus B)| = 1$ and $B \setminus \psi^{-1}(g)$ is the union of label classes.\label{item:sparse-paving-union-lemma-2-2}
    \end{enumerate}
\end{lemma}

\begin{proof}
    Assume that $B$ is not the union of label classes, i.e., there exists a pair of $e \in B$ and $f \notin B$ with $\psi(e) = \psi(f)$.
    Let $X = B - e + f$.
    Then, $X$ is not a basis as $\psi(X) = \psi(B) \notin F$ and $B$ is the only $F$-avoiding basis.
    Thus, \cref{lem:sparse-paving-union-lemma}\ref{item:sparse-paving-union-lemma-2} implies the statement~\ref{item:sparse-paving-union-lemma-2-2} here.
\end{proof}

Let $A \coloneqq E \setminus B$.
As in \Cref{lem:pigeon}, under fixed orderings $a_1, \dotsc, a_r$ of $A$ and $b_1, \dotsc, b_r$ of $B$, for each pair $(i, j)$ with $1 \le i \le j \le r$ and $(i, j) \neq (1, r)$, we define $\hat{B}_{i,j}\coloneqq (B \setminus \set{b_i, \dotsc, b_j}) \cup \set{a_i, \dotsc, a_j} = \set{b_1,\dotsc, b_{i-1}, a_{i},\dotsc, a_j, b_{j+1},\dotsc, b_r}$.

\begin{lemma}
\label{lem:coloring-ordering}
    Let $C$ be a set and $c\colon E \to C$ be a coloring.
    If $|c(A)| + |c(B)| \le r+1$, then there are orderings $a_1, \dotsc, a_r$ of $A$ and $b_1, \dotsc, b_r$ of $B$ such that $\hat{B}_{i,j}$ is not the union of color classes for any pair $(i, j)$ with $1 \le i \le j \le r$ and $(i, j) \neq (1, r)$. 
\end{lemma}
 
\begin{proof}
    We construct desired orderings by fixing $a_i$ and $b_i$ for each $i = 1, \dotsc, r - 1$ in this order so that there exists $a \in A \setminus \{a_1, \dotsc, a_i\}$ with $c(a) = c(a_i)$ or $b \in B \setminus \{b_1, \dotsc, b_i\}$ with $c(b) = c(b_i)$.
    We first show that this is sufficient for our purpose, and then show that this is indeed possible.

    Suppose to the contrary that for orderings satisfying the above condition, there exists a pair $(i, j)$ such that $\hat{B}_{i,j}$ is the union of color classes.
    If $j < r$, then there exists $k > j$ such that $c(a_k) = c(a_j)$ or $c(b_k) = c(b_j)$.
    Since $\hat{B}_{i,j}$ is the union of color classes, $\{a_j, a_k\}$ or $\{b_j, b_k\}$ must be included in or disjoint from $\hat{B}_{i,j}$, but $\{a_j, a_k, b_j, b_k\} \cap \hat{B}_{i,j} = \{a_j, b_k\}$ by definition, a contradiction.
    Otherwise, $j = r$ and hence $1 < i \ (\le j)$.
    Then, similarly, there exists $k \ge i$ such that $c(a_k) = c(a_{i-1})$ or $c(b_k) = c(b_{i-1})$, but $\{a_{i-1}, a_k, b_{i-1}, b_k\} \cap \hat{B}_{i,r} = \{a_k, b_{i-1}\}$, a contradiction.
    Thus, the above construction is sufficient.

    Now we show that we can fix $a_i$ and $b_i$ for each $i = 1, \dotsc, r - 1$ so that the above condition is satisfied.
    The proof is done by induction on $r$.
    The base case when $r = 1$ is trivial.

    Suppose that $r \ge 2$.
    If $|c(A)| + |c(B)| = r + 1$, then by the pigeonhole principle (as $|A| + |B| = 2r$), there exists an element $a \in A$ such that $c(a) \neq c(a')$ for any $a' \in A \setminus \{a\}$ or $b \in B$ such that $c(b) \neq c(b')$ for any $b' \in B \setminus \{b\}$.
    By symmetry, we may assume that $c(b) \neq c(b')$ for any $b' \in B \setminus \{b\}$, and then by the pigeonhole principle (as $|A| = r$ and $|c(A)| = r + 1 - |c(B)| \le r - 1$), there exist two distinct elements $a, a' \in A$ with $c(a) = c(a')$.
    In this case, by setting $a_1 = a$ and $b_1 = b$, the condition for $i = 1$ is satisfied as $a \in A \setminus \{a_1\}$ with $c(a) = c(a_1)$, and that for $i \ge 2$ can be satisfied by applying the induction hypothesis to the remaining part $A' = A \setminus \{a_1\}$, $B' = B \setminus \{b_1\}$, and $c' \colon (A' \cup B') \to C$, where $c'$ is the restriction of $c$ and satisfies $|c'(A')| + |c'(B')| = |c(A)| + |c(B)| - 1 = r$.
    
    The remaining case is when $|c(A)| + |c(B)| \le r$.
    Then, since $|c(A)| \le r - |c(B)| \le r - 1$, we have two distinct elements $a, a' \in A$ with $c(a) = c(a')$, and by setting $a_1 = a$ and arbitrarily choosing $b_1 \in B$, the condition for $i = 1$ is satisfied as $a \in A \setminus \{a_1\}$ with $c(a) = c(a_1)$ and that for $i \ge 2$ can be satisfied by applying the induction hypothesis as well.
    Thus, we complete the proof.
\end{proof}

We turn back to the proof of \cref{thm:proximity-sparse-paving}.
We discuss the two cases in \cref{lem:sparse-paving-union-lemma-2} separately.

\smallskip
(1)~
Suppose that $B$ is the union of label classes.
Let us regard $\psi$ as a coloring and apply \cref{lem:coloring-ordering} to get orderings $a_1, \dotsc, a_r$ of $A$ and $b_1, \dotsc, b_r$ of $B$.
We then apply \cref{lem:pigeon} to these orderings to get a pair $(i, j)$ with $1 \le i \le j \le r$ and $(i, j) \neq (1, r)$.
Then, $\hat{B}_{i,j}$ is $F$-avoiding but is not the union of label classes, contradicting to \cref{lem:sparse-paving-union-lemma}.

\smallskip
(2)~
Otherwise, there exists $g \in \Gamma$ such that $\psi^{-1}(g) \cap B = \set{e}$ and $\psi^{-1}(g) \cap A = \set{f}$ for some $e \in B$ and $f \notin B$.
We define a coloring $c\colon E \to \Gamma \cup \set{\star}$ by $c(e') \coloneqq \psi(e')$ for $e' \in E \setminus \set{f}$ and $c(f) \coloneqq \star$.
Now we apply \cref{lem:coloring-ordering} to $c$ to get orderings $a_1, \dotsc, a_r$ of $A$ and $b_1, \dotsc, b_r$ of $B$, and then by \cref{lem:pigeon}, we obtain a pair $(i, j)$ with $1 \le i \le j \le r$ and $(i, j) \neq (1, r)$ such that $\hat{B}_{i,j}$ is $F$-avoiding but is not the union of color classes.
We apply \cref{lem:sparse-paving-union-lemma} to $X = \hat{B}_{i,j}$.
(1) is impossible because the color classes refine the label classes.
In case of (2), by the assumption of \cref{lem:sparse-paving-union-lemma-2}(2), $B \symdif X = \set{e, f}$, implying that $X$ is the union of color classes, a contradiction.
\end{proof}

\section{Proximity Theorem for at Most 4 Forbidden Labels}\label{sec:four-forbidden}

We first observe that \cref{conj:baumgart} does not hold for regular matroids: a pair of disjoint bases of the matroid $R_{10}$ does not have an SI-ordering. 
$R_{10}$ is the matroid appearing in Seymour's fundamental decomposition theorem of regular matroids~\cite{seymour1980decomposition}, and it can be defined as the even-cycle matroid of the complete graph $K_5$.
The ground set of this matroid is the edge set of $K_5$ and its bases are the sets of five edges forming a subgraph containing exactly one odd cycle and no even cycle.
It is not difficult to check the following statement; see also the proof of \cite[Proposition~5.5]{berczi2023reconfiguration}.

\begin{lemma} \label{lem:r10auto}
	Consider $R_{10}$ as the even-cycle matroid of the complete graph $K_5$ on vertex set $\set{v_1,\dots, v_5}$.
	Then, for any two disjoint bases $A$ and $B$ of $R_{10}$, there exists an automorphism of $R_{10}$ mapping $A$ and $B$ to the 5-cycles $\Set{v_iv_{i+1}}{i \in [5]}$ and  $\Set{v_iv_{i+2}}{i \in [5]}$ of $K_5$, respectively, where indices are meant in a cyclic order (e.g., $v_6 = v_1$).
\end{lemma}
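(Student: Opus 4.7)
The plan is to classify disjoint basis pairs of $R_{10}$ into graph-theoretic types and then send each to the canonical pentagon pair by a matroid automorphism. Every basis of $R_{10}$ is a connected spanning subgraph of $K_5$ with exactly one cycle, which is odd; hence bases are either Hamiltonian $5$-cycles, or of the form ``triangle plus two tree edges attaching the remaining two vertices'' (call these \emph{triangle-plus-pendants}). Since $|A|=|B|=5$ and $A, B$ are disjoint in $K_5$ (which has $10$ edges), $B = E(K_5) \setminus A$. A direct case analysis on the attachment pattern of the pendants of $A$ shows that disjoint basis pairs fall into two classes: \textbf{(I)} both $A$ and $B$ are Hamilton cycles, forming a pair of complementary $5$-cycles in $K_5$; or \textbf{(II)} both are triangle-plus-pendants, where the two triangles share exactly one vertex---namely the unique triangle vertex of $A$ not used as a pendant attachment---and the four pendants of $A$ and $B$ realize the two perfect matchings of the $K_{2,2}$ between the two triangles' remaining vertex pairs.

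For class (I), the symmetric group $S_5$ acts on $K_5$ by vertex permutations, inducing matroid automorphisms of $R_{10}$. This action is transitive on the $12$ Hamilton cycles of $K_5$ (each with dihedral stabilizer of order $10$), so it is transitive on ordered pairs of disjoint Hamilton cycles; hence any class (I) pair is mapped to the outer/inner pentagon pair by a suitable $\sigma \in S_5$.

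For class (II), the $S_5$-orbit has size $60$, disjoint from the class (I) $S_5$-orbit of size $12$. To reach the pentagon pair, one must therefore use a matroid automorphism $\tau$ of $R_{10}$ not coming from $S_5$. The plan is to first apply an element of $S_5$ to reduce to a canonical class (II) representative, say $A_0 = \{v_1v_2, v_1v_3, v_2v_3, v_1v_4, v_2v_5\}$ and $B_0 = E(K_5) \setminus A_0$ (so both triangles share vertex $v_3$), and then define $\tau$ explicitly as a permutation of $E(K_5)$ sending $(A_0, B_0)$ to the pentagon pair. The hardest part of the plan is to exhibit such a $\tau$ and verify that it preserves the basis family of $R_{10}$; a clean way to carry out this verification is via a $\GF(2)$-representation of $R_{10}$, for which $\tau$ corresponds to an invertible linear change of basis of the column space, making preservation of independence immediate. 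With $\tau$ in hand, composing with the appropriate $S_5$-elements handles every class (II) pair, completing the proof.
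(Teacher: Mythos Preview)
The paper does not actually prove this lemma: it simply states that it ``is not difficult to check'' and points to \cite[Proposition~5.5]{berczi2023reconfiguration}. So there is no in-paper argument to compare against, and your outline is already considerably more detailed than what the paper provides.

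Your plan is sound. The classification of disjoint basis pairs into the two types (I) and (II) is correct: a short case analysis on pendant attachments (both pendants to the same triangle vertex, or a pendant-to-pendant path, each force a $4$-cycle in the complement) shows that exactly the $12$ Hamilton-cycle bases and the $60$ ``triangle with two pendants to distinct triangle vertices'' bases have basis complements, matching the $72$ ordered disjoint basis pairs. Your $S_5$-transitivity arguments on each class are also correct, with stabilizers $D_5$ and $\Z_2$ respectively.

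The one point to flag is that exhibiting the extra automorphism $\tau$ is not a formality: its existence is precisely the content of the lemma, since it is what merges the two $S_5$-orbits into one. Your proposed route---realize $\tau$ as an element of $GL_5(\GF(2))$ acting on a binary representation of $R_{10}$---is the right one (indeed, $R_{10}$ is uniquely $\GF(2)$-representable, so $\mathrm{Aut}(R_{10})$ is exactly the setwise stabilizer of the $10$ columns inside $GL_5(\GF(2))$, and every matroid automorphism arises this way). But you should actually write down such a $P \in GL_5(\GF(2))$ and the induced column permutation, and check that it sends your canonical class-(II) pair to the pentagon pair; without that, the argument is a correct strategy rather than a completed proof.
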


We show the following using \cref{lem:r10auto}.

\begin{restatable}{theorem}{RtenNotSIBO}\label{thm:R10notSIBO}
    $R_{10}$ is not SIBO.
\end{restatable}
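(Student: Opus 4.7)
The plan is to apply \cref{lem:r10auto} to reduce to the concrete pair $A = \{\alpha_i \coloneqq v_iv_{i+1} : i \in [5]\}$ (outer 5-cycle) and $B = \{\beta_i \coloneqq v_iv_{i+2} : i \in [5]\}$ (pentagram), with indices taken modulo 5, and to derive a contradiction from a hypothetical SI-ordering $a_i = \alpha_{\pi(i)}$, $b_i = \beta_{\sigma(i)}$ for permutations $\pi, \sigma \in S_5$.

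The first substantive step is to work out the single-element exchange structure on $(A,B)$. By inspecting which chord of the 4-path $B - \beta_j$ closes into a triangle rather than a 4-cycle, one finds that $B - \beta_j + \alpha_i$ is a basis iff $i - j \in \{0, 1, 3\} \pmod 5$, and dually $A - \alpha_k + \beta_j$ is a basis iff $j - k \in \{1, 2, 3\} \pmod 5$. Applying the former to each diagonal set $\hat{B}_{i,i}$ and the latter to the two corner sets $\hat{B}_{1,4} = A - \alpha_{\pi(5)} + \beta_{\sigma(5)}$ and $\hat{B}_{2,5} = A - \alpha_{\pi(1)} + \beta_{\sigma(1)}$, and intersecting the resulting shift constraints, pins $\pi(1) - \sigma(1) \equiv \pi(5) - \sigma(5) \equiv 3 \pmod 5$, while $\pi(i) - \sigma(i) \in \{0,1,3\}$ for $i \in \{2,3,4\}$.

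To handle the remaining degrees of freedom I would invoke the dihedral symmetry of $(A,B)$: the group $D_5 \le \mathrm{Aut}(R_{10})$ generated by the cyclic rotation $v_i \mapsto v_{i+1}$ and the reflection through $v_2$ preserves $\{A, B\}$ setwise, so after replacing $(\pi, \sigma)$ by a $D_5$-translate we may assume $\sigma(1) = 1$, hence $\pi(1) = 4$, together with a partial normalization of $\sigma(2)$ via the reflection fixing $\beta_1$. This cuts the verification down to about a dozen orbit representatives for $\sigma$, each admitting only a constant number of candidate $\pi$'s that satisfy the single-exchange constraint at every coordinate.

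The final step is to check, for each surviving $(\pi, \sigma)$, that some $\hat{B}_{i,j}$ contains a 4-cycle of $K_5$ --- and hence a circuit of $R_{10}$ --- yielding the desired contradiction. The five ``balanced'' 4-cycles $C_k = \{\alpha_k, \alpha_{k+2}, \beta_k, \beta_{k+1}\}$ ($k \in [5]$) serve as a uniform obstruction in most cases, while the $(3,1)$-type 4-cycles $\{\alpha_k, \alpha_{k+1}, \alpha_{k+2}, \beta_{k+3}\}$ handle the few exceptions. The main obstacle will be the bookkeeping of this final step: each individual check is a trivial inspection, but there are enough cases that organizing them cleanly is essential. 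I would do so by splitting on the structural type of $\hat{B}_{1,3}$ --- whether its three $\alpha$-edges form a 3-path or a 2-path-plus-isolated-edge in the 5-cycle $A$ --- so that a single 4-cycle obstruction covers many cases at once.
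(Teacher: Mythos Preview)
Your plan is correct and would go through, and it shares its opening moves with the paper: both reduce via \cref{lem:r10auto} to the explicit pair $(A,B)$, and both exploit the single-exchange structure---indeed, your shift constraint $\pi(1)-\sigma(1)\equiv 3$ is exactly what the paper uses (phrased as ``$(a_1,b_1)$ is a symmetric exchange between $A$ and $B$'') to force $b_1=v_3v_5$ once $a_1=v_1v_2$ is fixed. The difference is organizational. The paper proceeds \emph{sequentially}: having pinned $(a_1,b_1)$, it uses the reflection fixing $\{a_1,b_1\}$ to reduce $a_2$ to two candidates, then invokes the constraint that $(a_2,b_2)$ be a symmetric exchange for the modified pair $(A-a_1+b_1,\,B-b_1+a_1)$---equivalently, that $\hat{B}_{1,2}$ and $\hat{B}_{3,5}$ be bases---to pin $b_2$ and kill one branch by a $4$-cycle; one more level with $(a_3,b_3)$ finishes the proof. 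The whole case tree has three leaves. Your route, by contrast, front-loads only the diagonal and corner constraints ($\hat{B}_{i,i}$, $\hat{B}_{1,4}$, $\hat{B}_{2,5}$) and defers all the length-$2$ and longer interval constraints to the final enumeration, which is why you arrive at roughly a dozen $\sigma$'s times several $\pi$'s rather than three cases. Both arguments use the same ingredients; the paper simply interleaves the interval constraints with the branching, pruning much earlier and avoiding the bookkeeping you identify as the main obstacle.
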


\begin{proof}
We consider $R_{10}$ as the even-cycle matroid of the complete graph $K_5$ on vertex set $\set{v_1,\dots, v_5}$. 
    Let $A$ and $B$ be disjoint bases of $R_{10}$. By \cref{lem:r10auto}, we may assume $A=\Set{v_iv_{i+1}}{i \in [5]}$ and  $B=\Set{v_iv_{i+2}}{i \in [5]}$.
    Suppose to the contrary that there exists an SI-ordering $a_1,\dotsc, a_5$ of $A$ and $b_1,\dotsc, b_5$ of $B$.
    
	We may assume by symmetry that $a_1 = v_1v_2$. Using that $(a_1,b_1)$ is a symmetric exchange between $A$ and $B$, it follows that $b_1 = v_3v_5$.
	By symmetry, we may assume that $a_2 \in \set{v_2v_3, v_3v_4}$. As $(a_2,b_2)$ is a symmetric exchange between $A-a_1+b_1$ and $B-b_1+a_1$, it follows that \[(a_2,b_2)\in \set{(v_2v_3, v_2v_4), (v_3v_4, v_1v_3)}.\] If $(a_2, b_2) = (v_2v_3, v_2v_4)$, then $A-a_2 + b_2 = \set{v_1v_2, v_2v_4, v_3v_4, v_4v_5, v_5v_1}$ contains the 4-cycle $\set{v_1v_2, v_2v_4, v_4v_5, v_5v_1}$, a contradiction. We conclude that $(a_2, b_2) = (v_3v_4, v_1v_3)$.
 
	Using that $a_3 \in A\setminus \set{a_1,a_2} = \set{v_2v_3, v_4v_5, v_5v_1}$ and 	
	$(a_3, b_3)$ is a symmetric exchange between $(A\setminus \set{a_1,a_2})\cup \set{b_1,b_2} = \set{v_1v_3, v_3v_5, v_2v_3, v_4v_5, v_5v_1}$ and $(B\setminus \set{b_1,b_2})\cup \set{a_1,a_2} = \set{v_1v_2, v_3v_4,  v_2v_4, v_4v_1, v_5v_2}$, it follows that \[(a_3, b_3) \in \set{(v_2v_3, v_1v_2), (v_4v_5, v_1v_4), (v_1v_5, v_2v_5)}.\]
	It is clear that $(a_3, b_3) \ne (v_2v_3, v_1v_2)$ as $v_1v_2 = a_1$.
	If $(a_3,b_3) = (v_4v_5, v_1v_4)$, then $A-a_3+b_3 = \set{v_1v_2, v_3v_4, v_1v_4, v_2v_3, v_5v_1}$, which contains the 4-cycle $\set{v_1v_2, v_2v_3, v_3v_4, v_4v_1}$, a contradiction.
	Otherwise, $(a_3, b_3) = (v_1v_5, v_2v_5)$, and then $A-a_3+b_3 = \set{v_1v_2, v_3v_4, v_2v_5, v_2v_3, v_4v_5}$, which contains the 4-cycle $\set{v_2v_3, v_3v_4, v_4v_5, v_5v_2}$, a contradiction.
	This finishes the proof.
\end{proof}

Using a computer program, we verified that this is the only example of a basis pair not having an SI-ordering up to rank 5.

\begin{proposition} \label{prop:sat}
Let $M$ be a matroid of rank at most 5, and $(A,B)$ a basis pair of $M$ not having an SI-ordering.
Then, $A$ and $B$ are disjoint and the restriction $M|(A\cup B)$ is isomorphic to $R_{10}$.
\end{proposition}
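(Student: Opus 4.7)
The plan is to combine two matroid-theoretic reductions with a computer-assisted finite case check. The first reduction is to restrict to $A\cup B$: every swap set $\hat{B}_{i,j}$ appearing in the definition of an SI-ordering is a size-$r$ subset of $A\cup B$, hence a basis of $M$ if and only if it is a basis of $M|(A\cup B)$. Thus $(A,B)$ admits an SI-ordering in $M$ iff it does in $M|(A\cup B)$, and we may assume $E(M)=A\cup B$. The second reduction is to contract the common elements $C=A\cap B$: then $A\setminus C$ and $B\setminus C$ are disjoint bases of $M/C$, and any SI-ordering for $(A\setminus C, B\setminus C)$ in $M/C$ extends to one for $(A,B)$ in $M$ by prepending a fixed enumeration $c_1,\dots,c_{|C|}$ of $C$ to both sequences. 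Indeed, each resulting swap set either equals $B$ (when $j\le |C|$) or has the form $C\cup \hat{B}'_{i',j'}$ for appropriate $(i',j')$, and by the definition of contraction the latter is a basis of $M$ precisely when $\hat{B}'_{i',j'}$ is a basis of $M/C$. Contrapositively, if $(A,B)$ has no SI-ordering, then neither does $(A\setminus C, B\setminus C)$ in $M/C$; so we may further assume $A\cap B=\emptyset$.

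After these reductions, $M$ is a matroid on exactly $2r\le 10$ elements whose ground set is the disjoint union of two bases. It remains to verify the claim in this finite universe, which we carry out by a computer search. First, we enumerate all isomorphism classes of such matroids, using existing databases of small matroids filtered by the presence of two complementary bases and independently cross-validated against the basis-exchange axiom. For each class we then enumerate orbits of ordered disjoint basis pairs under $\operatorname{Aut}(M)$. For each representative $(M, A, B)$, the existence of an SI-ordering is encoded as a SAT instance whose Boolean variables record the positions of the elements of $A$ and $B$ in the two orderings and whose clauses enforce the permutation constraints together with the basis condition $\hat{B}_{i,j}\in \mathcal{B}(M)$ for every pair $(i,j)$ with $1\le i\le j\le r$. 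As a sanity check, one could equivalently brute-force enumerate the $(r!)^2\le 14400$ candidate pairs of orderings.

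The expected output of the search is that every instance is satisfiable, with the unique exception of the instance corresponding to $R_{10}$ and any of its disjoint basis pairs, which by \cref{lem:r10auto} all lie in a single orbit under $\operatorname{Aut}(R_{10})$. The main obstacle is not conceptual but computational: ensuring the correctness of the automated verification. We mitigate this by running the enumeration in two independent implementations and cross-checking their outputs, and by re-verifying every positive instance by constructing an explicit SI-ordering from the SAT solver's satisfying assignment and checking, directly from the basis oracle, that each of the $\binom{r+1}{2}-1$ swap sets is indeed a basis.
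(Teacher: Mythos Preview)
Your matroid-theoretic reductions---restricting to $M|(A\cup B)$ and then contracting $C=A\cap B$---are correct and coincide with the paper's. The gap is in the computational step. You propose to enumerate all isomorphism classes of matroids on $2r\le 10$ elements with two complementary bases via ``existing databases of small matroids'' and then test each instance for an SI-ordering. This is only feasible for $r\le 4$: matroids on at most nine elements are tabulated, but the list of matroids on ten elements is not known and their number is expected to be astronomically large. The paper states this explicitly. Hence the decisive rank-$5$ case cannot be executed as you describe, and the proof does not go through.

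The paper's remedy is to avoid matroid enumeration altogether by making the matroid part of the SAT instance. For each $r\le 5$ it builds a single CNF whose Boolean variables are $x_B$, one for every $B\in\binom{[2r]}{r}$, encoding which $r$-sets are bases. The clauses (i) enforce the basis-exchange axiom among the $x_B$'s, (ii) force $x_{[r]}$ and $x_{[2r]\setminus[r]}$ to be true, and (iii) for \emph{every} pair of orderings of $[r]$ and $[2r]\setminus[r]$, assert that some $\hat{B}_{i,j}$ has $x_{\hat{B}_{i,j}}$ false---i.e., the fixed pair has no SI-ordering. Thus one SAT call per rank searches over all candidate matroids simultaneously; for $r\le 4$ the instance is UNSAT, and for $r=5$ all models are isomorphic to $R_{10}$. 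Encoding the matroid, rather than quantifying over an external enumeration of matroids, is the missing idea.
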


Giving a human-readable proof of \cref{prop:sat} seems difficult, as even \cref{conj:gabow} was verified only up to rank 4~\cite{kotlar2013serial}. (Note that $R_{10}$ is known to satisfy \cref{conj:gabow}~\cite{berczi2023reconfiguration}, thus \cref{prop:sat} implies that the conjecture holds up to rank 5.)
Up to rank 4, one can check the validity of \cref{prop:sat} by using one of the existing databases of small matroids~\cite{mayhew2008nine}. 
As the list (or number) of rank-5 matroids on 10 elements is unknown and expected to be very large~\cite{mayhew2008nine}, we used a different approach: we encoded a basis pair of a matroid of given rank not having an SI-ordering as a Boolean formula, with variables encoding which subsets are bases, and decided the satisfiability with a SAT solver; see \cref{sec:sat} for details. We note that a similar but much more sophisticated approach has been used to study Rota's basis conjecture~\cite{kirchweger2022sat}.

We are ready to prove the following theorem.

\begin{theorem} \label{thm:four-forbidden}
\cref{conj:proximity} is true when $|F| \le 4$.
\end{theorem}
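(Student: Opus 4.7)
The plan is to reduce the problem to a tight configuration via \cref{lem:reduction}, dichotomize with \cref{prop:sat}, and handle the exceptional $R_{10}$ case directly. Assume for contradiction that $(M,\psi,F)$ is a counterexample with $|F|\le 4$. By \cref{lem:reduction}, I may pass to a minor $M'$ of rank $r=|F|+1\le 5$, together with a labeling $\psi'\colon E(M')\to \Gamma$, a set $F'\subseteq \Gamma$ with $|F'|=|F|$, and a basis $B$ of $M'$ such that $B$ is the unique $F'$-avoiding basis and $A\coloneqq E(M')\setminus B$ is also a basis. Thus $(A,B)$ is a pair of disjoint bases of a rank-$r$ matroid with $r\le 5$, and I may apply \cref{prop:sat}.

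If $(A,B)$ admits an SI-ordering $a_1,\dots,a_r$ of $A$ and $b_1,\dots,b_r$ of $B$, then \cref{lem:pigeon} produces a pair $(i,j)$ with $1\le i\le j\le r$ and $(i,j)\ne (1,r)$ such that $\hat B_{i,j}$ is $F'$-avoiding; the SI-ordering guarantees $\hat B_{i,j}$ is a basis, and since $A\cap B=\emptyset$ and $i\le j$ we have $a_i\in \hat B_{i,j}\setminus B$, so $\hat B_{i,j}\ne B$, contradicting the uniqueness of $B$. Otherwise, \cref{prop:sat} forces $M'|(A\cup B)\cong R_{10}$, and since $|A\cup B|=2r\le 10$, we must have $r=5$, $|F|=4$, $A\cup B=E(M')$, and $M'\cong R_{10}$.

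It remains to rule out the $R_{10}$ subcase. By \cref{lem:r10auto}, I may normalize $B=\{v_iv_{i+2}:i\in[5]\}$ and $A=\{v_iv_{i+1}:i\in[5]\}$ in the even-cycle representation of $R_{10}$, and write $\alpha_i\coloneqq \psi'(v_iv_{i+1})$, $\beta_i\coloneqq \psi'(v_iv_{i+2})$, $\delta_i\coloneqq \alpha_i-\beta_i$, and $\phi\coloneqq \psi'(B)$. Since $A\cup B=E(R_{10})$, every basis of $R_{10}$ has the form $(B\setminus B_0)\cup A_0$ for some $A_0\subseteq A$, $B_0\subseteq B$ with $|A_0|=|B_0|$, and the uniqueness of $B$ forces every such nonzero label-difference $\sum_{a\in A_0}\alpha_a-\sum_{b\in B_0}\beta_b$ to lie in the 4-element set $F'-\phi\subseteq \Gamma\setminus\{0\}$. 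Taking $(A_0,B_0)=(\{a_i\},\{b_i\})$ for $i\in[5]$ already yields $\delta_1,\dots,\delta_5\in F'-\phi$, so by pigeonhole two of them coincide, say $\delta_1=\delta_2=d$. I would combine this with the constraints coming from further basis pairs---including mixed single-swaps such as $(\{a_i\},\{b_{i-1}\})$, contiguous and non-contiguous double- and triple-swaps (which I would enumerate using the description of bases of $R_{10}$ as unicyclic spanning subgraphs of $K_5$ with odd unique cycle), and the full swap $A$---together with the $D_5$-symmetry of the configuration, to deduce that either some basis label-difference is forced to equal $0$ (immediately contradicting uniqueness) or more than four distinct nonzero values are forced into $F'-\phi$, yielding a contradiction.

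The hardest step will be the $R_{10}$ subcase: because $R_{10}$ is not SIBO (\cref{thm:R10notSIBO}), the uniform pigeon argument of the first case does not apply, and one must exploit the explicit structure of the $162$ bases of $R_{10}$ (all of which live in $A\cup B$) together with additive-combinatorial and dihedral-symmetry constraints on the $\alpha_i,\beta_i$. I expect this step to require a carefully orchestrated finite case analysis on the possible patterns of coincidences among the $\delta_i$'s within the $4$-element set $F'-\phi$, each pattern ruled out by displaying an appropriate basis of $R_{10}$ whose label violates the constraints.
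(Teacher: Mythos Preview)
Your reduction via \cref{lem:reduction}, the SI-ordering case via \cref{lem:pigeon}, and the invocation of \cref{prop:sat} to isolate $R_{10}$ are all correct and match the paper exactly. The gap is in the $R_{10}$ subcase: you set up the notation and observe the pigeonhole collision among the $\delta_i$, but then defer to an unexecuted ``carefully orchestrated finite case analysis'' on the coincidence pattern of the $\delta_i$. That is not a proof; it is a hope that a case split of unspecified depth terminates. Nothing in your outline explains why the single-swap differences $\delta_i$ (together with further unspecified double- and triple-swap constraints) must overdetermine $F'-\phi$; without actually writing out the cases you cannot claim the argument closes.

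The paper avoids any case analysis by reusing \cref{lem:pigeon} in a sharper way. For each cyclic shift $k\in[5]$ it writes down an explicit pair of orderings of $A$ and $B$ that is an SI-ordering \emph{except at a single position}: $\hat B_{i,j}$ is a basis for all $1\le i\le j\le 5$ except $(i,j)=(3,3)$. In particular all prefix sets $\hat B_{1,j}$ are bases, hence have labels in $F'$, so the ``furthermore'' clause of \cref{lem:pigeon} yields some $(i,j)$ with $i>1$ and $\psi'(\hat B_{i,j})=\psi'(B)$; uniqueness of $B$ forces $(i,j)=(3,3)$, i.e.\ $\psi'(b_3)=\psi'(a_3)$. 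Reading off which edges sit in positions $3$ for each $k$ gives five equalities of the shape $\psi'(v_{k+3}v_{k+4})=\psi'(v_{k+4}v_{k+6})$; summing over $k$ yields $\psi'(B)=\psi'(A)$, contradicting $\psi'(B)\notin F'\ni\psi'(A)$. The key idea you are missing is precisely this ``almost-SI-ordering'' trick: instead of treating $R_{10}$ as a generic exception requiring brute force, one exhibits orderings that fail the SIBO condition at exactly one controlled spot, so that \cref{lem:pigeon} still bites and pins down an exact label equality rather than a mere pigeonhole collision.
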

\begin{proof} 
Suppose otherwise and let $(M, \psi, F)$ be a counterexample. 
We may assume by \cref{lem:reduction} that $M$ has rank $r = |F|+1$, it contains exactly one $F$-avoiding basis $B$, and $A \coloneqq E(M)\setminus B$ is also a basis (with $\psi(A) \in F$).

If the basis pair $(A, B)$ has an SI-ordering, then we obtain by \cref{lem:pigeon} an $F$-avoiding basis other than $B$, a contradiction.
Otherwise, as $M$ has rank $|F|+1\le 5$, \cref{prop:sat} implies that $M$ is isomorphic to $R_{10}$, which we regard as the even-cycle matroid of the complete graph $K_5$ on the vertex set $\set{v_1,\dots, v_5}$.
By \cref{lem:r10auto}, we may assume that $A=\Set{v_iv_{i+2}}{i \in [5]}$ and  $B=\Set{v_iv_{i+1}}{i \in [5]}$.
Fix $k \in [5]$ and define orderings
\begin{align}
(a_1,\dots, a_5) &\coloneqq  (v_{k+2}v_{k+4}, v_kv_{k+2}, v_{k+4}v_{k+6}, v_{k+3}v_{k+5}, v_{k+1}v_{k+3}) \\
(b_1,\dots, b_5) & \coloneqq (v_kv_{k+1}, v_{k+2}v_{k+3}, v_{k+3}v_{k+4}, v_{k+1}v_{k+2}, v_{k+4}v_{k+5})
\end{align}
of $A$ and $B$, respectively (see \cref{fig:r10}). Then, it can be checked that for indices $1 \le i \le j \le 5$, the set $\hat{B}_{i,j} \coloneqq \set{b_1,\dotsc, b_{i-1}, a_{i},\dotsc, a_j, b_{j+1},\dots, b_5}$ is a basis if and only if $(i,j)\ne (3,3)$.
In particular, $\hat{B}_{1,j}$ is a basis for each $j \ge 1$, and hence $\psi(\hat{B}_{1,j}) \in F$, as $B$ is the only $F$-avoiding basis.
Thus, by \cref{lem:pigeon}, we have $\psi(\hat{B}_{i,j}) = \psi(B) \notin F$ for some $(i, j)$, which must be $(3, 3)$ as $B$ is the only $F$-avoiding basis, again.
That is, $\psi(v_{k+3}v_{k+4}) = \psi(b_3) = \psi(a_3) = \psi(v_{k+4}v_{k+6})$.
Since this holds for every $k \in [5]$, we get that
\[\psi(B) = \sum_{k=1}^5 \psi(v_{k+3}v_{k+4}) = \sum_{k=1}^5 \psi(v_{k+4}v_{k+6}) = \psi(A),\]
which contradicts that $\psi(B) \notin F$ and $\psi(A) \in F$.

\begin{figure}[h!]
\centering
\includegraphics[width=0.38\textwidth]{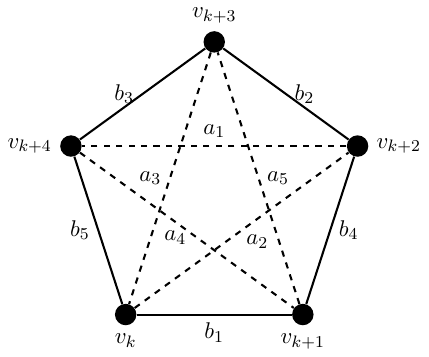}
\caption{Labeling the elements of the matroid $R_{10}$ such that for indices $0 \le i < j \le 5$, the set $\hat{B}_{i,j}$ is a basis if and only if $(i,j)\ne (2,3)$. Recall that bases are the sets of size 5 containing no $4$-cycle of $K_5$.} \label{fig:r10}
\end{figure}

\end{proof}

\section{Proximity for Multiple Labelings}\label{sec:multiple-label}

In this section, we verify \cref{conj:multiple} for various classes of matroids.
We begin with an example showing that the function $d$ in \cref{conj:multiple} must satisfy $d(k) \ge 2^k-1$ for each $k\ge 1$, even for uniform matroids.

\begin{example} \label{ex:tight}
    Let $k$ be a positive integer. Let $r = 2^k-1$, and let $U_{r,2r}$ denote the uniform matroid of rank $r$ on some ground set $E$ of size $2r$. We show that there exist group labelings $\psi_i \colon E \to \Gamma_i$ for $i \in [k]$, and a basis $A$ of $U_{r,2r}$ such that $B \coloneqq E \setminus A$ is the only basis of $M$ satisfying $\psi_i(B) \ne 0$ for all $i \in [k]$.
    
    Let us fix a set $A \subseteq E$ of size $r$.
    We set $\Gamma_{k} = \Z$, and $\Gamma_i = \Z_{2^i}$ for $i \in [k-1]$. Finally, we define the group labelings by
    \begin{align}
        \psi_i(e) \coloneqq
        \begin{cases}
            2^{i-1} - 1 \hspace{1em} &\text{if } e \in A, \\
            2^{i-1} \hspace{1em} &\text{if } e \notin A \text{ and } i \neq k, \\
            -2^{k-1} \hspace{1em} &\text{if } e \notin A \text{ and } i = k.  
        \end{cases}
    \end{align}
    
    Note that $r \equiv -1 \Mod{2^i}$ for all $i \in [k-1]$. 
    It is easy to see that $\psi_i(B) \neq 0$ for all $i \in [k]$, where $B = E \setminus A$.
    Hence, we only need to show that for each $\ell \in [r-1]$, any set $A'$ obtained by exchanging $\ell$ elements of $A$ into $B$ is zero in at least one of the labelings.
    
    Let $i \in \N$ be the largest for which $2^{i-1}$ divides $\ell +1$. Note that $\ell \equiv 2^{i-1} - 1 \Mod{2^i}$. Moreover, as $\ell < r = 2^k - 1$, we have $i \leq k$. Now if $i < k$, then
    \begin{align}
        \psi_i(A') &= (2^{i-1} - 1)(r - \ell) + 2^{i-1}\ell \\
        &= (2^{i-1}-1)r + \ell \\
        &\equiv (2^{i-1}-1) \cdot (-1) + (2^{i-1} - 1) &&\Mod{2^i} \\ & \equiv 0 &&\Mod{2^i}.
    \end{align} 
    On the other hand, if $i = k$, then $\ell = 2^{k-1} - 1$, and in this case \[\psi_k(A') = (2^{k-1}-1)(r - \ell) - 2^{k-1}\ell = (2^{k-1}-1)2^{k-1} - 2^{k-1}(2^{k-1}-1) = 0.\]
\end{example}

Next, we show the existence of $d(k)$ as in \cref{conj:multiple} for several matroid classes. The proofs will be based on the following lemma.

\begin{lemma} \label{lem:multiple}
Let $\psi_i\colon E \to \Gamma_i$ be group labelings on a finite set $E$ and $f_i \in \Gamma_i$ group elements for $i \in [k]$. Let $B \subseteq E$ be a subset with $\psi_t(B) \ne f_t$ for all $t \in [k]$.
Let $X_1, \dots, X_\ell \subseteq B$ and $Y_1, \dots, Y_\ell \subseteq E \setminus B$ be pairwise disjoint nonempty subsets. For $1 \le i \le j \le \ell$, let \[B_{i,j} \coloneqq (B\setminus (X_i \cups X_j)) \cup (Y_i \cups Y_j).\]
If $\ell \ge \lfloor (e-1/2) k!\rfloor$, there are indices $i, j$ such that $\psi_t(B_{i,j}) \ne f_t$ for all $t \in [k]$. 
\end{lemma}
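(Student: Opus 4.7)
The plan is to reformulate the statement cleanly and then prove it by induction on $k$. Set $D_k \coloneqq \lfloor (e-1/2) k! \rfloor$ for brevity. First I would introduce the sequence $C_0 \coloneqq B$ and $C_v \coloneqq B_{1, v} = (B \setminus (X_1 \cups X_v)) \cup (Y_1 \cups Y_v)$ for $v \in [\ell]$. A direct telescoping, separately summing the contributions of $\psi_t(X_r)$ and $\psi_t(Y_r)$ for $r \in \{i, \dots, j\}$, yields
\[
\psi_t(B_{i,j}) - \psi_t(B) = \psi_t(C_j) - \psi_t(C_{i-1})
\]
for all $1 \le i \le j \le \ell$ and every $t \in [k]$. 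Writing $\phi_t(v) \coloneqq \psi_t(C_v)$ and $g_t \coloneqq f_t - \psi_t(B)$, which is nonzero by hypothesis, the lemma reduces to finding indices $0 \le a < b \le \ell$ (corresponding to $i = a+1$ and $j = b$) with $\phi_t(b) - \phi_t(a) \ne g_t$ for every $t \in [k]$.

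I would prove this reformulation by induction on $k$. The base case $k = 1$ is immediate: with $\ell \ge D_1 = 2$, if every pair $0 \le a < b \le 2$ had $\phi_1(b) - \phi_1(a) = g_1$, then summing $\phi_1(1) - \phi_1(0) = g_1$ and $\phi_1(2) - \phi_1(1) = g_1$ gives $\phi_1(2) - \phi_1(0) = 2g_1$, which combined with $\phi_1(2) - \phi_1(0) = g_1$ forces $g_1 = 0$. For the inductive step with $k \ge 3$, I would fix $a = 0$ and examine the pairs $(0, b)$ for $b \in [\ell]$: if any is good we are done, so assume that each such $b$ belongs to at least one set $S_t \coloneqq \{b' \in [\ell] : \phi_t(b') = \phi_t(0) + g_t\}$, giving $\bigcup_{t \in [k]} S_t = [\ell]$ and, by pigeonhole, $|S_{t_0}| \ge \lceil \ell / k \rceil$ for some $t_0$. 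A direct check of the recurrence $D_k = k D_{k-1} + 1$, valid for $k \ge 3$, shows that $\ell \ge D_k$ implies $|S_{t_0}| \ge D_{k-1} + 1$. Since $\phi_{t_0}$ is constant on $S_{t_0}$, every internal pair there is automatically good for $t_0$; re-indexing $S_{t_0}$ and applying the inductive hypothesis to the remaining $k-1$ labelings produces a pair good for all $t \ne t_0$, hence good overall.

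The main obstacle is the remaining base case $k = 2$, which the inductive step does not cover since $D_2 = 4 = 2 D_1$ rather than $2 D_1 + 1$. With $\ell = 4$ and all ten pairs assumed bad, I would first reduce to the situation where every $\phi_t$-class has size at most two: a $\phi_t$-class of size at least three would allow a direct reduction to the $k = 1$ case applied to the other labeling on that class. The same pigeonhole on $(0, b)$-pairs then forces $|S_1| = |S_2| = 2$ with $S_1 \sqcup S_2 = \{1, 2, 3, 4\}$; moreover, the single internal pair within each $S_t$ must be bad for the complementary labeling (otherwise we are done), yielding constraints $\phi_{3-t}(b_2^t) - \phi_{3-t}(b_1^t) = g_{3-t}$ where $S_t = \{b_1^t < b_2^t\}$. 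I would then enumerate the six possible interleavings of these four elements inside $\{1, 2, 3, 4\}$ and, for each one, use the four cross pairs between $S_1$ and $S_2$ to extract additional equations on the $\phi_t$-values that, combined with $g_1, g_2 \ne 0$, turn out to be infeasible. This finishes the induction and thus the lemma.
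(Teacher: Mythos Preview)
Your proposal is correct and follows essentially the same route as the paper. Both arguments reduce to partial sums (your $\phi_t(v)=\psi_t(C_v)$ is exactly what the paper implicitly uses), establish the recursion $c_k\le k\,c_{k-1}+1$ via pigeonhole on the prefixes $(0,b)$, and treat $k=2$ separately because the recursion just misses the target there. Your handling of $k=2$ differs slightly in execution: you bound the $\phi_t$-fibre sizes by~$2$ and then run a case split over the three possible partitions $S_1\sqcup S_2=\{1,2,3,4\}$ using the cross pairs, whereas the paper exploits the symmetric suffix counts $|\{q:\psi_t(B_{q,4})=f_t\}|=2$ to avoid the explicit enumeration. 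Both arguments are short and yield the same bound $c_2\le 4$.
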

\begin{proof}
Let $c_k$ denote the smallest integer such that whenever $\psi_1, \dots, \psi_k$ are labelings, $f_1, \dots, f_k$ group elements, and $B, X_1,\dots, X_\ell, Y_1,\dots, Y_\ell$ are subsets as in the theorem and $\ell \ge c_k$ holds, then there exist indices $1 \le i \le j \le \ell$ such that $\psi_t(B_{i,j}) \ne f_t$ for all $t \in [k]$. If no such integer exists, then define $c_k$ to be infinite.
Observe that $c_0 = 1$.

\begin{claim} \label{cl:recursion}
$c_k \le k \cdot c_{k-1}+1$ holds for $k \ge 1$.
\end{claim}
\begin{proof}
Assume for contradiction that $c_k > k \cdot c_{k-1} + 1$, that is, there exist labelings $\psi_1,\dots, \psi_k$, group elements $f_1,\dots, f_k$, and subsets $B, X_1,\dots, X_\ell, Y_1,\dots, Y_\ell$ as required with $\ell \ge k \cdot c_{k-1} + 1$ such that there exist no indices $1\le i \le j \le \ell$ such that $\psi_t(B_{i,j}) \ne f_t$ for all $t \in [k]$.
In particular, for each $q \in [\ell]$, there exists $t \in [k]$ such that $\psi_t(B_{1,q}) = f_t$. Since $\ell \ge k \cdot c_{k-1}+1$, this implies that there exists $t \in [k]$ with $|\Set{q \in [\ell]}{ \psi_t(B_{1,q}) = f_t}| \ge c_{k-1}+1$. We may assume that $t=k$, and let $1 \le q_0 < \dots < q_{c_{k-1}} \le \ell$ be indices with $\psi_k(B_{1,q_0}) = \dots = \psi_k(B_{1, q_{c_{k-1}}}) = f_k$. 
Define $X'_i \coloneqq X_{q_{i-1}+1} \cups X_{q_i}$ and $Y'_i \coloneqq Y_{q_{i-1}+1} \cups Y_{q_i}$ for $i \in [c_{k-1}]$, and $B'_{i,j} \coloneqq (B\setminus (X'_i\cups X'_j)) \cup (Y'_i\cups Y'_j)$ for $1\le i \le j \le c_{k-1}$. 
Observe that $Y'_i = B_{1, q_i} \setminus B_{1, q_{i-1}}$ and $X'_i = B_{1, q_{i-1}} \setminus B_{1, q_i}$ hold for $i \in [c_{k-1}]$; thus
\[
\psi_k(Y'_i) - \psi_k(X'_i) = \psi_k(B_{1, q_i}) - \psi_k(B_{1, q_{i-1}})  = f_k-f_k = 0.
\]
Using the definition of $c_{k-1}$, we get that there exist indices $1 \le i' \le j' \le c_{k-1}$ such that $\psi_t(B'_{i',j'}) \ne f_t$ for all $t \in [k-1]$.
By defining $i \coloneqq q_{i'-1}+1$ and $j \coloneqq q_{j'}$, we get $B_{i,j} = B'_{i',j'}$ and 
\[\psi_k(B_{i,j})  = \psi_k(B) + \sum_{s = i'}^{j'} (\psi_k(Y'_s)-\psi_k(X'_s)) = \psi_k(B) \ne f_k. 
\]
This proves that $\psi_t(B_{i,j}) \ne f_t$ for all $t \in [k]$, a contradiction.
\end{proof}

\cref{cl:recursion} implies that $c_1 \le c_0+1 = 2$ and $c_2\le 2c_1+1 \le 5$. We improve the latter bound by one.

\begin{claim}
\label{cl:2labelings}
$c_2 \le 4$.
\end{claim}

\begin{proof}
Assume for contradiction that $c_2 > 4$, and let $\psi_1,\dots, \psi_4$ be labelings, $f_1,\dots, f_4$ group elements, and $B, X_1,\dots,X_4, Y_1,\dots, Y_4$ subsets as in the proof of \cref{cl:recursion} such that there exist no indices $1\le i \le j \le 4$ such that $\psi_t(B_{i,j})\ne f_t$ for all $t \in [2]$.
We have that $|\Set{q \in [4]}{\psi_t(B_{1,q}) = f_t}| \le 2$ holds for all $t \in [2]$, as otherwise we get a contradiction using the proof of \cref{cl:recursion} and $c_1 \le 2$.
This implies that 
\begin{equation} \label{eq:prefix2}
|\Set{q \in [4]}{\psi_1(B_{1,q}) = f_1}| = |\Set{q \in [4]}{\psi_2(B_{1,q}) = f_2}| = 2. 
\end{equation}
By symmetry, we also get 
\begin{equation} \label{eq:suffix2}
 |\Set{q \in [4]}{\psi_1(B_{q,4}) = f_1}| = |\Set{q \in [4]}{\psi_2(B_{q,4}) = f_2}| = 2.
 \end{equation}
We may assume that $\psi_1(B_{1,4}) = f_1$. Let $p, q\in [3]$ denote the unique indices such that $\psi_1(B_{1,p}) = f_1$ and $\psi_1(B_{q+1,4}) = f_1$. If $p = q$, then \[f_1 + f_1 = \psi_1(B_{1,p}) + \psi_1(B_{p+1, 4}) = \psi_1(B) + \psi_1(B_{1,4}) = \psi_1(B)+f_1,\]
thus $\psi_1(B) = f_1$, a contradiction. We conclude that $p \ne q$. Let $r\in [3]$ be the unique index with $\set{p,q,r} = [3]$.
Then, from \eqref{eq:prefix2} and \eqref{eq:suffix2}, we get that $\psi_1(B_{1,p}) = \psi_1(B_{1,4}) = \psi_1(B_{q+1,4}) = f_1$ and $\psi_2(B_{1,q}) = \psi_2(B_{1,r}) = \psi_2(B_{p+1, 4}) = \psi_2(B_{r+1,4}) = f_2$.
Then,
\[f_2+f_2 = \psi_2(B_{1,r})+\psi_2(B_{r+1, 4}) = \psi_2(B_{1,q})+\psi_2(B_{q+1, 4}) = f_2 + \psi_2(B_{q+1,4}).\]
thus $\psi_2(B_{q+1, 4}) = f_2$. This contradicts \eqref{eq:suffix2} and finishes the proof.
\end{proof}

Using induction we obtain that $c_k \le  \left(\sum_{i=0}^k \frac{1}{i!} - \frac12\right) k!$.
Indeed, it holds for $k=2$ by \cref{cl:2labelings}, and using \cref{cl:recursion} and induction we get
\[c_k \le k \cdot c_{k-1}+1 \le k \cdot \left(\sum_{i=0}^{k-1} \frac{1}{i!} - \frac12\right) (k-1)!  + 1 = \left(\sum_{i=0}^k \frac{1}{i!}-\frac12\right) k!.\]
The statement of the lemma follows by using that $\sum_{i=0}^\infty \frac{1}{i!} = e$.
\end{proof}

\begin{theorem} \label{thm:mult}
Let $A$ be a basis of a matroid $M$ on ground set $E$, and $\psi_i\colon E \to \Gamma_i$ group labelings and $f_i \in \Gamma_i$ group elements for $i \in [k]$.
Assume that $M$ has at least one basis $B$ with $\psi_t(B) \ne f_i$ for all $i \in [k]$.
\begin{enumerate}[label=(\roman*)]
\item If $M$ is SIBO, then it has a basis $B$ with $\psi_t(B) \ne f_t$ for all $t \in [k]$ and $|A\setminus B| \le \lfloor (e-1/2)k!\rfloor-1$. \label{it:multSIBO}
\item If $k=2$, then $M$ has a basis $B$ with $\psi_1(A) \ne f_1$, $\psi_2(A) \ne f_2$, and $|A\setminus B| \le 3$. \label{it:mult2}
\end{enumerate}
\end{theorem}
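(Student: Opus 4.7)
For part (i), I would argue by contradiction. Among all bases $B$ satisfying $\psi_t(B) \ne f_t$ for every $t \in [k]$, choose one minimizing $|A \setminus B|$, and suppose $\ell \coloneqq |A \setminus B| \ge \lfloor(e-1/2)k!\rfloor$. Apply the SIBO property to the pair $(A, B)$ to obtain orderings $a_1, \ldots, a_r$ of $A$ and $b_1, \ldots, b_r$ of $B$ under which every interval swap $\hat{B}_{i,j}$ is a basis. The set-equality clause in the definition of SIBO forces $a_p = b_p$ at every position $p$ where $a_p \in A \cap B$; consequently the active positions $P \coloneqq \{p \in [r] : a_p \ne b_p\}$ satisfy $|P| = \ell$, and writing $P = \{p_1 < \cdots < p_\ell\}$, the singletons $X_i \coloneqq \{b_{p_i}\} \subseteq B$ and $Y_i \coloneqq \{a_{p_i}\} \subseteq E \setminus B$ are pairwise disjoint. \Cref{lem:multiple} then supplies indices $1 \le i' \le j' \le \ell$ for which $B^\star \coloneqq (B \setminus (X_{i'} \cup \cdots \cup X_{j'})) \cup (Y_{i'} \cup \cdots \cup Y_{j'})$ satisfies $\psi_t(B^\star) \ne f_t$ for all $t$. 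Positions in $[p_{i'}, p_{j'}] \setminus P$ contribute $a_p = b_p$ and cancel, so $B^\star = \hat{B}_{p_{i'}, p_{j'}}$ is a basis by the SI-property; and $|A \setminus B^\star| = \ell - (j' - i' + 1) < \ell$, contradicting the choice of $B$.

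For part (ii), the strategy is to reduce to the SIBO case and invoke (i). Suppose a counterexample exists with $k = 2$; by establishing a two-labeling analog of \cref{lem:reduction}---obtained by adapting the contraction/deletion argument of \cite{horsch2024problems_arxiv} to preserve both label constraints simultaneously---one may pass to a minor of rank at most $d(2) + 1 = 4$ in which $B$ is the unique avoiding basis and its complement $E(M) \setminus B$ is also a basis. Since $R_{10}$ has rank $5$ and ten elements, \cref{prop:sat} implies that every rank-$4$ matroid is SIBO: the only obstruction to an SI-ordering for a basis pair in rank at most $5$ is an $R_{10}$-restriction on $A \cup B$, and this cannot occur when $|A \cup B| \le 8$. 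Hence part~(i) with $k = 2$ applies to the minor and yields the bound $|A \setminus B| \le \lfloor (e - 1/2) \cdot 2! \rfloor - 1 = 3$, contradicting our assumption.

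The principal obstacle is the two-labeling analog of \cref{lem:reduction} required for part (ii). In the single-labeling setting of \cite{horsch2024problems_arxiv}, the reduction proceeds by identifying elements forced into or out of every $F$-avoiding basis and then contracting or deleting them while adjusting $F$ accordingly. With two labelings into possibly distinct groups $\Gamma_1$ and $\Gamma_2$, each reduction step must be carried out while simultaneously preserving both the $\psi_1$- and $\psi_2$-constraints, so the bookkeeping becomes correspondingly more delicate. Part~(i), in contrast, should follow cleanly from \cref{lem:multiple} once one exploits the pairing structure built into any SI-ordering.
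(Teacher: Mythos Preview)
Your argument for part~(i) is correct and matches the paper's proof; the observation that an SI-ordering must align common elements (forcing $a_p = b_p$ whenever $a_p \in A \cap B$) is a detail the paper leaves implicit, and you fill it in cleanly.

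For part~(ii), however, your approach carries the gap you yourself flag: it rests on an unproven two-labeling analog of \cref{lem:reduction}. The paper neither states nor needs such a lemma. Instead it argues directly: assuming the closest avoiding basis $B$ has $|A \setminus B| \ge 4$, use repeated basis exchange to find a basis $A'$ with $A \cap B \subseteq A' \subseteq A \cup B$ and $|A' \setminus B| = 4$. The contraction $M/(A' \cap B)$ then has rank $4$, and since $R_{10}$ has rank $5$, \cref{prop:sat} forces every rank-$4$ matroid to be SIBO. Hence the pair $(B \setminus A',\, A' \setminus B)$ admits an SI-ordering $(x_1,\dots,x_4),(y_1,\dots,y_4)$ in this minor; lifting back, each $(B \setminus \{x_i,\dots,x_j\}) \cup \{y_i,\dots,y_j\}$ is a basis of $M$, and \cref{lem:multiple} with $\ell = 4$ yields an avoiding basis strictly closer to $A$ (the swapped-in elements $y_i$ all lie in $A$), contradicting the choice of $B$.

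So you do not need to reduce the entire instance to a small minor with a \emph{unique} avoiding basis; it suffices to exhibit one rank-$4$ contraction in which the relevant basis pair has an SI-ordering. The detour through a multi-labeling reduction lemma is unnecessary and, as you acknowledge, not obviously available.
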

\begin{proof}
Let $B$ be a basis of $M$ such that $\psi_i(B)\ne f_i$ for all $i \in [k]$ and $|A\setminus B|$ is minimum.
Assume that $\ell \coloneqq |A\setminus B| \ge \lfloor (e-1/2)k!\rfloor$. If $M$ is SIBO, then there exist orderings $x_1,\dots, x_\ell$ of $B$ and $y_1,\dots, y_\ell$ of $A$ such that $(B\setminus \set{x_i,\dots, x_j})\cup \set{y_i,\dots, y_j}$ is a basis for $1 \le i \le j \le r$. Then, using \cref{lem:multiple} with $X_i = \set{x_i}$ and $Y_i = \set{y_i}$ for $i \in [\ell]$, we get a contradiction to $A$ and $B$ being closest. This proves \ref{it:multSIBO}.
For proving \ref{it:mult2}, assume that $k=2$ and $M$ is not SIBO. Observe that in this case $|A\setminus B| \ge \lfloor (e-1/2) \cdot 2\rfloor  = 4$.
Let $A'$ be a basis of $M$ with $A'\subseteq A\cup B$ and $|A'\setminus B| = 4$.
Then, $M/(A'\cap B)$ has rank 4 and hence is SIBO by \cref{prop:sat}.
Thus, there exist orderings $x_1,\dots, x_4$ of $B\setminus A'$ and $y_1,\dots, y_4$ of $A' \setminus B$ such that $(B\setminus \set{x_i,\dots, x_j})\cup \set{y_i,\dots, y_j}$ is a basis for $1 \le i \le j \le 4$. This contradicts $A$ and $B$ being closest by \cref{lem:multiple}. 
\end{proof}

\cref{ex:tight} shows that for $k=2$, the bound 3 in \cref{thm:mult}\ref{it:mult2} is tight. Observe that this differs from the case $\psi_1 = \psi_2$ where the tight bound is 2~\cite{horsch2024problems}.

Finally, we derive the validity of \cref{conj:multiple} for matroids representable over a fixed, finite field and sparse paving matroids.
For positive integers $\alpha$ and $k$, we define a matroid $M$ to be \emph{weakly $(\alpha,k)$-base orderable} if for every ordered basis pair $(A,B)$ of $M$ with $|A\setminus B|\ge \alpha$, there exist pairwise disjoint nonempty subsets $X_1,\dots, X_k \subseteq B \setminus A$ and $Y_1, \dots, Y_k \subseteq A \setminus B$ such that $\left(B \setminus \bigcup_{i\in Z} X_i\right) \cup \bigcup_{i \in Z} Y_i$ is a basis for each $Z \subseteq [k]$. The following was shown in \cite{horsch2024problems}.

\begin{theorem}[Hörsch, Imolay, Mizutani, Oki, Schwarcz~\cite{horsch2024problems}] \label{thm:weakly}
    There is a computable function $h\colon \N\times \N \to \N$ such that for every prime power $q$, every $\GF(q)$-representable matroid is weakly $(h(q,k),k)$-orderable for any $k \in \N$.
\end{theorem}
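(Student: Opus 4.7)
The plan is to recast the statement as a matrix Ramsey claim over $\GF(q)$ and to establish it by iterated pigeonhole together with matroid-minor bookkeeping. Fix a $\GF(q)$-representation of $M$ and an ordered basis pair $(A, B)$ with $m \coloneqq |A \setminus B|$ large. Contracting $A \cap B$, we may assume $A$ and $B$ are disjoint bases of a rank-$m$ matroid; taking $B$ as the reference basis, the elements of $A$ correspond to the columns of an $m \times m$ non-singular matrix $T$ over $\GF(q)$. Standard matroid representation theory then gives: for disjoint $X \subseteq B$ and $Y \subseteq A$ with $|X| = |Y|$, $(B \setminus X) \cup Y$ is a basis if and only if the submatrix $T[X, Y]$ is non-singular.

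A convenient sufficient condition for weak $(m, k)$-base orderability is to find $k$ singleton pairs $(x_i, y_i)_{i \in [k]}$, with distinct $x_i \in B$ and distinct $y_i \in A$, such that after re-indexing the $k \times k$ submatrix $T[\{x_1, \dots, x_k\}, \{y_1, \dots, y_k\}]$ is upper triangular with non-zero diagonal. Every ``principal'' subsubmatrix indexed by $Z \subseteq [k]$ then inherits the triangular structure with non-zero diagonal and is thus non-singular, which simultaneously certifies the $2^k$ required bases $(B \setminus \{x_i : i \in Z\}) \cup \{y_i : i \in Z\}$.

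The task therefore reduces to the following matrix Ramsey claim: there is a computable $h(q, k)$ such that every non-singular $m \times m$ matrix over $\GF(q)$ with $m \ge h(q, k)$ contains a $k \times k$ upper-triangular submatrix with non-zero diagonal after row and column reordering. I would prove this by induction on $k$. Pick any non-zero entry $T_{x_1 y_1}$; by pigeonhole applied to the remaining $m-1$ entries of column $y_1$, some value $c \in \GF(q)$ appears on a row subset $R$ of size at least $(m-1)/q$. If $c = 0$, keep $R$ as is; otherwise the row operation that zeroes out those entries is reinterpreted as the Schur complement of $T_{x_1 y_1}$ in $T$, which represents the matroid minor $M / y_1 \setminus x_1$ with reference basis $B \setminus \{x_1\}$. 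In either case, the resulting $(m-1) \times (m-1)$ non-singular matrix has $R$-rows vanishing on column $y_1$; a rank argument produces a column set $C$ of size $|R|$ on which these rows still form a non-singular block, and the inductive hypothesis delivers a $(k-1) \times (k-1)$ triangular pattern extending $(x_1, y_1)$ to the desired $k \times k$ pattern.

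The main obstacle is the pullback step: row operations on $T$ alter the reference basis, so one must pass to the matroid minor at each stage and confirm that triangularity in the Schur complement corresponds to triangularity in $T$. Because triangularity of $T'$ translates matroid-intrinsically to the statement ``$(B \setminus \{x_i\}) \cup \{y_j\}$ is a basis for $i \le j$ and not a basis for $i > j$,'' the property transfers cleanly across minors. Tracking the sizes of $R$ and $C$ through the $k$ iterations loses a factor of $q$ at each step and forces $h(q, k)$ to grow tower-type in $k$; since the statement only demands computability, this is acceptable.
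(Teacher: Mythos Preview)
The present paper does not prove \cref{thm:weakly}; it is cited from~\cite{horsch2024problems} and used as a black box, so there is no in-paper argument to compare against. Evaluating your proposal on its own: the central ``matrix Ramsey claim'' --- that every sufficiently large non-singular $T$ over $\GF(q)$ contains a $k\times k$ submatrix that is, after reordering rows and columns, upper triangular with non-zero diagonal --- is false already for $q=2$ and $k=3$. Take $T=J+I$ over $\GF(2)$ (zeros on the diagonal, ones elsewhere), which is non-singular for every even $m$. An upper-triangular pattern would need $T_{r_2,c_1}=T_{r_3,c_1}=0$, forcing $r_2=c_1=r_3$, impossible. In fact, for this matroid \emph{no} choice of singleton blocks $X_i=\{r_i\}$, $Y_i=\{c_i\}$ witnesses weak $(\cdot,3)$-base orderability: the three $2\times 2$ non-singularity conditions force $(r_1,r_2,r_3)$ to be a cyclic permutation of $(c_1,c_2,c_3)$, and then the resulting $3\times 3$ minor has determinant $0$ over $\GF(2)$. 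The definition, however, allows non-singleton $X_i,Y_i$ (for $T=J+I$ with $m\ge 6$, taking $X_i$ and $Y_i$ to be the index blocks $\{2i-1,2i\}$ works), so your restriction to singletons is precisely where the plan fails.

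The Schur-complement patch you sketch for the $c\ne 0$ branch does not rescue this and introduces its own gap. In the minor obtained by contracting $y_1$ and deleting $x_1$, the entry condition ``$T'_{x_i,y_j}\ne 0$'' translates in $M$ to ``$T[\{x_1,x_i\},\{y_1,y_j\}]$ is non-singular'', not to ``$T_{x_i,y_j}\ne 0$''. Consequently, a triangular pattern found in $T'$ certifies non-singularity of $T[\{x_i:i\in Z\},\{y_i:i\in Z\}]$ only for subsets $Z$ with $1\in Z$; the subsets with $1\notin Z$ remain uncontrolled. Your assertion that triangularity ``transfers cleanly across minors'' conflates these two families of conditions.
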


Following the terminology of \cite{horsch2024problems}, for a basis $B$ of a matroid $M$, we say that a minor $M'$ of $M$ is a \textit{$B$-minor} if $M' = (M|X)/Y$ for some $X, Y$ with $Y \subseteq B \subseteq X \subseteq E(M)$.
In this case, $\mathcal{B}(M') = \Set{B' \subseteq X \setminus Y}{B' \cup Y \in \mathcal{B}(M)}$. We show the following result, which is related to results of \cite{pendavingh2018number}.

\begin{theorem}
\label{thm:uniform_minors}
    Let $k\ge 0$ be an integer and $M$ a sparse paving matroid of rank $r$. If $\min\{r, |E(M)|-r\} \ge \binom{2k}{k}$, then for each basis $B$, $M$ has a $B$-minor isomorphic to $U_{k, 2k}$.
\end{theorem}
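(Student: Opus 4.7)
The plan is to proceed by induction on $k$. The base case $k = 0$ is trivial: taking $X = Y = B$ yields a $B$-minor with empty ground set, which is isomorphic to $U_{0,0}$. For the inductive step with $k \ge 1$, assume the theorem holds for $k - 1$. Since $\binom{2k-2}{k-1} \le \binom{2k}{k}$, applying the inductive hypothesis to $M$ yields subsets $X_0 \subseteq B$ and $Y_0 \subseteq E(M) \setminus B$ of size $k - 1$ each such that $(M|(B \cup Y_0))/(B \setminus X_0) \cong U_{k-1, 2k-2}$. I then aim to find $b \in B \setminus X_0$ and $c \in (E(M) \setminus B) \setminus Y_0$ extending $(X_0, Y_0)$ to a $U_{k, 2k}$ $B$-minor; by the description of a $B$-minor, this amounts to requiring $(B \setminus U) \cup V$ to be a basis of $M$ for every $U \subseteq X_0 + b$ and $V \subseteq Y_0 + c$ with $|U| = |V|$.

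Call $(b, c)$ \emph{bad} if some non-basis $H$ of size $r$ satisfies $P_H \coloneqq B \setminus H \subseteq X_0 + b$ and $Q_H \coloneqq H \setminus B \subseteq Y_0 + c$. By the inductive hypothesis any such obstructing $H$ must have $b \in P_H$ or $c \in Q_H$, so I split obstructions into three types: \emph{Type A} with $b \in P_H$ and $Q_H \subseteq Y_0$, forbidding a whole column of the grid $(B \setminus X_0) \times ((E(M) \setminus B) \setminus Y_0)$; \emph{Type B} with $c \in Q_H$ and $P_H \subseteq X_0$, forbidding a row; and \emph{Type C} with $b \in P_H$ and $c \in Q_H$, forbidding a single grid point. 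The key use of the sparse paving property---that any two distinct non-bases of size $r$ share at most $r - 2$ elements---is that for each ``pattern'' $(P', Q) \in \binom{X_0}{s-1} \times \binom{Y_0}{s}$ with $1 \le s \le k - 1$, at most one choice of $b$ can produce a Type A non-basis $(B \setminus (P' + b)) \cup Q$: two such choices would create non-bases intersecting in exactly $r - 1$ elements. Summing over patterns via Vandermonde's identity bounds the number of bad columns by $A \coloneqq \binom{2k-2}{k-2}$, and symmetrically the number of bad rows is at most $A$.

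An analogous analysis handles Type C obstructions: for each pattern $(P', Q') \in \binom{X_0}{s} \times \binom{Y_0}{s}$ with $0 \le s \le k - 1$, the pairs $(b, c)$ giving a Type C obstruction $(B \setminus (P' + b)) \cup (Q' + c)$ form a partial matching in the grid, since two such pairs sharing only one of the two coordinates would again yield non-bases with intersection $r - 1$ (while pairs differing in both coordinates exactly hit the bound $r - 2$, which is permitted). Each matching has size at most $\min(r - k + 1, n - r - k + 1)$, and there are $C \coloneqq \binom{2k-2}{k-1}$ patterns, so the number of bad grid points is at most $C \cdot \min(r - k + 1, n - r - k + 1)$.

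Consequently the number of good pairs $(b, c)$ is at least $(r - k + 1 - A)(n - r - k + 1 - A) - C \cdot \min(r - k + 1, n - r - k + 1)$. Using $r - k + 1, n - r - k + 1 \ge \binom{2k}{k} - k + 1$ and the Pascal-type identity $A + C = \binom{2k}{k}/2$, a direct computation reduces positivity of this quantity to the elementary inequality $\binom{2k-2}{k-1} \ge k$, which holds for every $k \ge 1$. The main technical obstacle is the sparse paving case analysis distinguishing the four configurations of pairs $(b_i, c_i)$ that underlie the matching structure for Type C obstructions; once this is established, the remainder reduces to combinatorial bookkeeping via Vandermonde's identity and Pascal's rule.
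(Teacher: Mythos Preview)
Your proof is correct, but your inductive step is organized differently from the paper's.

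Both arguments induct on $k$ and use the sparse paving property in the same essential way (two distinct non-bases of size $r$ meet in at most $r-2$ elements) to bound the number of obstructions. The difference lies in how the $U_{k-1,2k-2}$ minor is extended. The paper proceeds \emph{sequentially}: it first finds a single element $y \in B \setminus X_0$ so that uncontracting $y$ already yields a $U_{k,2k-1}$ minor, and only afterwards finds an element $x \in E \setminus (B \cup Y_0)$ extending this to $U_{k,2k}$. Each of these two steps involves only one type of obstruction, bounded by a single binomial coefficient ($\binom{2k-2}{k}$ and $\binom{2k-1}{k-1}$, respectively), and the existence of a good element follows from a one-line pigeonhole argument comparing that coefficient with $|Y|$ or $|E \setminus X|$.

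You instead find $(b,c)$ \emph{simultaneously} via a grid-counting argument. This forces you to distinguish three obstruction types, establish the matching structure for Type~C, and carry out the final inequality $(p-A)(q-A) > C\min(p,q)$ (which, as you note, factors nicely as $(A+C-k+1)^2 + C(C-k+1)$ at the extremal point). Everything checks out, but the bookkeeping is heavier than in the paper. The payoff of the paper's two-step approach is that no cross-interaction between the choice of $b$ and the choice of $c$ ever needs to be analyzed; the payoff of your approach is that it goes directly from $U_{k-1,2k-2}$ to $U_{k,2k}$ without the intermediate $U_{k,2k-1}$.
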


\begin{proof}
We prove by induction on $k$. The statements clearly hold for $k=0$, so assume that it holds for $k-1$. Let $E\coloneqq E(M)$, $n \coloneqq |E|$, and $\cH\coloneqq \binom{E}{r} \setminus \mathcal{B}(M)$.
Recall that since $M$ is sparse paving, $|H_1 \cap H_2| \le r-2$ holds for all $H_1, H_2\in \cH$ with $H_1 \ne H_2$.
Since $\min\set{r, n-r} \ge \binom{2k-2}{k-1}$, by induction there exist $X, Y$ with $Y \subseteq B \subseteq X \subseteq E$ such that $(M|X) / Y$ is isomorphic to $U_{k-1, 2k-2}$.

Let $S\coloneqq X \setminus Y$ and $\cH_0 \coloneqq \Set{H\in \cH}{H \subseteq B\cup S,\ |H\cap S| = k}$.
Observe that $|Y \setminus H| = |Y|-|H|+|H\cap S| = (r-k+1) - r + k = 1$ for each $H\in \cH_0$.
This together with $M$ being sparse paving implies that for each set $Z\subseteq S$, there exists at most one $H\in \cH_0$ with $H\cap S = Z$, implying $|\cH_0| \le \binom{2k-2}{k}$.
Since $|Y \setminus H| = 1$ for $H \in \cH_0$ and $|Y| = r-k+1 \ge \binom{2k}{k}-k+1 > \binom{2k-2}{k} \ge |\cH_0|$, there exists $y\in Y \cap \bigcap_{H \in \cH_0} H$.
Let $Y'\coloneqq Y-y$.
We claim that $(M|X)/ Y'$ is isomorphic to $U_{k, 2k-1}$, i.e., for each subset $Z\subseteq S + y$ with $|Z|=k$, $Z \cup Y'$ is a basis of $M$.
Indeed, if $y \in Z$, then $Z \cup Y'$ being a basis follows from $(M|X) / Y$ being isomorphic to $U_{k-1, 2k-2}$.
If $y \notin Z$, then $|Z\cap S| = k$, thus $Z\not \in \cH$ follows from $y \in \bigcap_{H \in \cH_0} H$.
This shows that $(M|X)/ Y'$ is indeed isomorphic to $U_{k, 2k-1}$.

Let $T\coloneqq X \setminus Y'$ and $\cH_1\coloneqq \Set{H \in \cH}{Y' \subseteq H,\ |H\cap T| = k-1}$.
Observe that $|H\setminus X| = 1$ for each $H\in \cH_1$.
Then, as $M$ is sparse paving, for each set $Z\subseteq T$, there exists at most one $H\in \cH_1$ with $H\cap T =Z$, implying $|\cH_1| \le \binom{2k-1}{k-1}$.
Since $|H \setminus X|=1$ for $H\in \cH_1$ and $|E \setminus X| \ge |E|-r -k+1 \ge \binom{2k}{k}-k+1 > \binom{2k-1}{k-1} \ge |\cH_1|$, there exists $x \in E \setminus X$ with $x \in (E \setminus X) \setminus \bigcup_{H \in \cH_1} H$.
Let $X'\coloneqq X + x$.
We claim that $(M|X') / Y'$ is isomorphic to $U_{k, 2k}$, i.e., for each subset $Z\subseteq T + x$ with $|Z|=k$, $Z \cup Y'$ is a basis of $M$.
Indeed, if $x \notin Z$, then $Z \cup Y'$ being a basis follows from $(M|X)/ Y'$ being isomorphic to $U_{k, 2k-1}$.
If $x \in Z$, then $|Z\cap T| = k-1$, thus $Z \cup Y' \notin \cH$ follows from $x \not \in \bigcup_{H\in \cH_1} H$.
Therefore, $(M|X') / Y'$ is indeed isomorphic to $U_{k, 2k}$, finishing the proof.  
\end{proof}

\begin{remark}
We note that Pendavingh and van der Pol~\cite{pendavingh2018number} showed that for a fixed $k$, asymptotically almost all matroids contain $U_{k,2k}$ as a minor. If $M$ is a sparse paving matroid, then a counting argument found in \cite[Lemma~4.7]{pendavingh2018number} combined with the observation $|\mathcal{B}(M)| \ge \frac{r}{r+1} \binom{|E|}{r}$ implies that if $r \ge \binom{2k}{k}$ and $|E(M)|-r \ge k$ hold, then $M$ contains $U_{k,2k}$ as a minor. It is not clear whether a similar argument can be used to give a simple proof of the existence of such a $B$-minor for any basis $B$ as in \cref{thm:uniform_minors}.
\end{remark}

\cref{thm:weakly,thm:uniform_minors} and \cref{lem:multiple} immediately verify \cref{conj:multiple} for matroids representable over a fixed, finite field and sparse paving matroids.

\begin{corollary}
\label{cor:gfq-multiple}
    Let $\mathcal{M}$ be the class of (1) $\GF(q)$-representable matroids for a fixed prime power $q$ or (2) sparse paving matroids.
    Then, there is a computable function $d \colon \N \to \N$ such that if $M \in \mathcal{M}$, $\psi\colon E(M)\to \Gamma_i$ are group labelings, $f_i \in \Gamma_i$ are group elements for $i \in [k]$, and $A$ is a basis of $M$, then $M$ has a basis $B$ with $\psi_i(B) \ne f_i$ for all $i \in [k]$ and $|A\setminus B| \le d(k)$, provided that $M$ has at least one basis $B'$ with $\psi_i(B') \ne f_i$ for all $i \in [k]$.
\end{corollary}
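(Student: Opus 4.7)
The plan is to pick a basis $B$ satisfying $\psi_i(B) \ne f_i$ for all $i \in [k]$ that minimizes $|A \setminus B|$ (one exists by assumption), and derive a contradiction whenever $|A \setminus B|$ is larger than the value we will take as $d(k)$. Set $\ell \coloneqq \lfloor (e - 1/2)\, k! \rfloor$, the threshold needed by \cref{lem:multiple}.

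For case~(1), suppose $|A \setminus B| \ge h(q, \ell)$, where $h$ is the function of \cref{thm:weakly}. That theorem, applied to the ordered basis pair $(A, B)$, produces pairwise disjoint nonempty subsets $X_1, \dotsc, X_\ell \subseteq B \setminus A$ and $Y_1, \dotsc, Y_\ell \subseteq A \setminus B$ such that $\bigl(B \setminus \bigcup_{i \in Z} X_i\bigr) \cup \bigcup_{i \in Z} Y_i$ is a basis of $M$ for every $Z \subseteq [\ell]$. \cref{lem:multiple} then yields indices $1 \le i \le j \le \ell$ with $\psi_t(B_{i,j}) \ne f_t$ for all $t \in [k]$, where $B_{i,j} = (B \setminus (X_i \cup \dotsb \cup X_j)) \cup (Y_i \cup \dotsb \cup Y_j)$. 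Since the removed elements lie in $B \setminus A$ and the (nonempty) added elements lie in $A \setminus B$, we have $|A \setminus B_{i,j}| < |A \setminus B|$, contradicting minimality. Thus $d(k) \coloneqq h(q, \ell) - 1$ suffices.

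For case~(2), we pass instead to the minor $N \coloneqq (M | (A \cup B)) / (A \cap B)$, which is sparse paving (since the class is minor-closed), has ground set $A \symdif B$ of size $2|A \setminus B|$, and has both $A \setminus B$ and $B \setminus A$ as bases. If $|A \setminus B| \ge \binom{2\ell}{\ell}$, then $\min\{\mathrm{rank}(N), |E(N)| - \mathrm{rank}(N)\} = |A \setminus B| \ge \binom{2\ell}{\ell}$, so by \cref{thm:uniform_minors} applied to $N$ with basis $B \setminus A$, there is a $(B \setminus A)$-minor of $N$ isomorphic to $U_{\ell, 2\ell}$. From this minor we extract elements $x_1, \dotsc, x_\ell \in B \setminus A$ and $y_1, \dotsc, y_\ell \in A \setminus B$ such that swapping any subset of the pairs $(x_i, y_i)$ yields a basis of $N$, hence (after reattaching $A \cap B$) a basis of $M$. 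Setting $X_i \coloneqq \{x_i\}$ and $Y_i \coloneqq \{y_i\}$ and invoking \cref{lem:multiple} contradicts minimality as before, so $d(k) \coloneqq \binom{2\ell}{\ell} - 1$ suffices.

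The only subtlety is ensuring that the subsets fed into \cref{lem:multiple} actually decrease $|A \setminus B|$ after the exchange. \cref{thm:weakly} provides this directly in case~(1); in case~(2), the crucial maneuver is to apply \cref{thm:uniform_minors} to the auxiliary matroid $N$ with the basis $B \setminus A$ (rather than to $B$ in $M$), which forces the basis-side elements of the $U_{\ell, 2\ell}$-minor into $B \setminus A$ and the complementary elements into $A \setminus B$.
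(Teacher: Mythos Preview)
Your proof is correct and follows essentially the same approach as the paper: in both cases you choose a closest feasible basis $B$, obtain $\ell = \lfloor (e-1/2)k!\rfloor$ exchangeable pieces via weak base orderability, and contradict minimality through \cref{lem:multiple}. For case~(2) you spell out explicitly the passage to the minor $N = (M|(A\cup B))/(A\cap B)$ and the application of \cref{thm:uniform_minors} to the basis $B\setminus A$ of $N$; the paper compresses this into the single sentence that sparse paving matroids are weakly $(\binom{2k}{k},k)$-base orderable, which is exactly what your argument establishes.
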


\begin{proof}
(1)~
Suppose that $M$ is $\GF(q)$-representable.
Let $h$ be the function provided by \cref{thm:weakly}, and define $d(k)\coloneqq h(q, \lfloor (e-1/2) k!\rfloor)-1$. Let $B$ be a basis with $\psi_i(B) \ne f_i$ for all $i \in [k]$ such that $|A\setminus B|$ is minimum. If $|A\setminus B|>d(k)$, then by the definition of $h$, for $\ell \coloneqq \lfloor (e-1/2) k!\rfloor$, there exist pairwise disjoint nonempty subsets $X_1,\dots, X_\ell \subseteq B \setminus A$ and $Y_1, \dots, Y_\ell \subseteq A \setminus B$ such that $\left(B \setminus \bigcup_{i\in Z} X_i\right) \cup \bigcup_{i \in Z} Y_i$ is a basis for each $Z \subseteq [\ell]$. Then, we get a contradiction by \cref{lem:multiple} to $A$ and $B$ being closest.

(2)~
\cref{thm:uniform_minors} implies that sparse paving matroids are $(\binom{2k}{k}, k)$-weakly base orderable for each $k \ge 1$.
Therefore, as with Case (1), we get the desired function $d(k)$.
\end{proof}

\section{Conclusion}\label{sec:conclusion}

In this paper, we have proven \cref{conj:proximity} for the case when the matroid is sparse paving or $|F| \le 4$, and settled \cref{conj:multiple} for $k = 2$ and some classes of matroids.
We conclude this paper by posing new conjectures.

\begin{conjecture}\label{conj:sparse-paving-is-sibo}
    Every sparse paving matroid is SIBO.
\end{conjecture}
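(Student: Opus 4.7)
The plan is to prove \cref{conj:sparse-paving-is-sibo} via a counting argument driven by the design-like structure forced by sparse paving. First, since the class of sparse paving matroids is minor-closed, contracting $A \cap B$ reduces us to the case where $A$ and $B$ are disjoint bases of a rank-$r$ sparse paving matroid on $E = A \cup B$. Writing $\mathcal{H} \subseteq \binom{E}{r}$ for the family of non-bases, an SI-ordering of $(A, B)$ is precisely a pair of orderings for which each set
\[
S_{i,j} = \{b_1,\dots,b_{i-1},a_i,\dots,a_j,b_{j+1},\dots,b_r\}, \quad 1 \le i \le j \le r,
\]
avoids $\mathcal{H}$. I would stratify $\mathcal{H}$ by $s \coloneqq |H \cap A|$ (only $1 \le s \le r-1$ can cause obstructions), writing $\mathcal{H}_s$ for the $s$-th level.

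Next I set up a union bound over the $(r!)^2$ ordering pairs. A direct count shows that a fixed $H \in \mathcal{H}_s$ equals $S_{i,j}$ for exactly $(r-s+1)(s!)^2((r-s)!)^2$ ordered pairs, so it suffices to prove
\[
\sum_{s=1}^{r-1} \frac{(r-s+1)\,|\mathcal{H}_s|}{\binom{r}{s}^{2}} < 1.
\]
The trivial bound $|\mathcal{H}_s| \le \binom{r}{s}^2$ makes this sum equal to $\binom{r+1}{2}-1$, which is useless, so the main technical step is to extract a much sharper estimate on $|\mathcal{H}_s|$ from the axiom $|H \cap H'| \le r-2$ for distinct $H, H' \in \mathcal{H}$. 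For $s = 1$ (and dually $s = r-1$) one quickly obtains $|\mathcal{H}_s| \le r$: two bad sets with the same singleton $A$-coordinate would share $r-1$ elements, so $s = 1$ yields a partial injection $\phi \colon A \to B$ sending $a$ to the unique $b$ with $B - b + a \notin \mathcal{B}(M)$ (when such a $b$ exists). For intermediate $s$ I would attempt a Fisher-type double counting in the spirit of \cite[Lemma~2.1]{bonin2013basis}, again exploiting the pairwise intersection constraint, and use duality to restrict to $s \le r/2$.

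The main obstacle is that even the sharp bound $|\mathcal{H}_1| \le r$ contributes exactly $1$ to the sum, so a pure union bound only just fails. Overcoming this should require exploiting correlations between intervals: in the extremal case $|\mathcal{H}_1| = r$ (where $\phi$ is a bijection), restricting to orderings satisfying $b_i \ne \phi(a_i)$ for all $i$ is a derangement-type condition that costs only a constant factor and leaves slack to absorb the contributions from $s \ge 2$. A fallback strategy is a strengthened induction in which one first fixes a pair $(a_1, b_1)$ realising a symmetric exchange between $A$ and $B$ and then recurses on an appropriate sparse paving minor with prescribed endpoints; the central difficulty there is that the $i = 1$ conditions (involving $a_1$) and the $i > 1$ conditions (involving $b_1$) enter the minor asymmetrically, so aligning them with the inductive hypothesis is where a genuinely new structural observation is likely needed.
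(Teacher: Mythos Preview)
The statement you are addressing is \emph{Conjecture}~\ref{conj:sparse-paving-is-sibo}, and the paper does \emph{not} prove it. The authors only report a computer verification up to rank~$6$ via a SAT encoding, and they explicitly note that the known proof of \cref{conj:gabow} for sparse paving matroids~\cite{bonin2013basis} does not seem to generalise. So there is no ``paper's own proof'' to compare against; what you have written is an attack on an open problem.

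As an attack, your write-up is an honest outline rather than a proof, and you already pinpoint the genuine gap yourself. The reduction to disjoint bases and the first-moment count are correct: a fixed $H\in\mathcal{H}_s$ coincides with $S_{i,j}$ for exactly $(r-s+1)\,(s!)^2((r-s)!)^2$ ordered pairs, so the union bound succeeds iff $\sum_{s=1}^{r-1}(r-s+1)\,|\mathcal{H}_s|/\binom{r}{s}^2<1$. Your bound $|\mathcal{H}_1|\le r$ is sharp (it can be attained by any perfect matching $a\mapsto\phi(a)$), and its contribution to the sum is already $r\cdot r/r^2=1$; symmetrically for $s=r-1$. Hence a pure first-moment argument cannot work, exactly as you say. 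The derangement refinement is a reasonable next move---since $H\in\mathcal{H}_1$ can only equal $S_{i,i}$, restricting to orderings with $b_i\neq\phi(a_i)$ kills the $s=1$ layer at a constant-factor cost---but you then need analogous conditioning at $s=r-1$ (which constrains $(a_1,b_1)$ and $(a_r,b_r)$), and after that you still have no usable bound on $|\mathcal{H}_s|$ for $2\le s\le r-2$ beyond a packing estimate in a product of Johnson schemes, which you have not carried out. The inductive fallback is likewise only a hope: fixing $(a_1,b_1)$ and passing to a minor does not obviously preserve the sparse paving hypothesis together with the interval constraints that involve $a_1$ or $b_1$, and you identify this as ``where a genuinely new structural observation is likely needed.'' That is an accurate assessment of the state of the proposal: it is a plan with a clearly located missing idea, not a proof.
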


We have checked the validity of the conjecture up to rank 6 using a SAT solver.
If true, \Cref{conj:sparse-paving-is-sibo} would give another proof of \cref{thm:proximity-sparse-paving}.
Unfortunately, the proof~\cite{bonin2013basis} of \cref{conj:gabow} for sparse paving matroids does not seem to generalize to this conjecture.

We also pose the following refinement of \cref{conj:multiple} in light of our lower bound on $d(k)$.
We state it in the form of a question rather than a conjecture as we do not expect it to hold for general matroids, whereas it is more likely to hold for uniform, SBO, and SIBO matroids.

\begin{question}\label{conj:quantative-multiple}
    Let $M$ be a matroid with the ground set $E$, $\psi_i \colon E \to \Gamma_i$ a group labeling, and $f_i \in \Gamma_i$ a group element for $i \in [k]$.
    Then, if at least one such basis exists, for any basis $A$ of $M$, is there a basis $B$ of $M$ with $\psi_i(B) \ne f_i$ for $i \in [k]$ and $|A\setminus B| \le 2^k - 1$? 
\end{question}

\paragraph{Acknowledgments}
The authors are grateful to the organizers of the 16th Emléktábla Workshops, where the collaboration of the authors started.
The authors thank Kristóf Bérczi, Siyue Liu, and Chao Xu for several useful discussions.

\snote{Please update / add funding:} 
Ryuhei Mizutani was supported by JSPS KAKENHI Grant Number JP23KJ0379 and JST SPRING Grant Number JPMJSP2108.
Taihei Oki was supported by JST ERATO Grant Number JPMJER1903, JST CREST Grant Number JPMJCR24Q2, JST FOREST Grant Number JPMJFR232L, JSPS KAKENHI Grant Numbers JP22K17853 and 24K21315, and Start-up Research Funds in ICReDD, Hokkaido University.
Yutaro Yamaguchi was supported by JSPS KAKENHI Grant Numbers 20K19743 and 20H00605, and by Start-up Program in Graduate School of Information Science and Technology, Osaka University.
This research has been implemented with the support provided by the Lend\"ulet Programme of the Hungarian Academy of Sciences -- grant number LP2021-1/2021.

\bibliographystyle{abbrv}
\bibliography{main}

\newpage
\appendix
\section*{Appendix}

\section{CNF formulation of finding a non-SIBO matroid}\label{sec:sat}

In this section, we describe how we can reduce the problem of finding a $2r$-elements, rank-$r$, non-SIBO matroid to SAT by describing a CNF (conjunctive normal form) formulation.

Let $E = [2r]$ be the ground set.
We prepare $\binom{2r}{r}$ Boolean variables $x_B$ indexed by $B \in \binom{E}{r}$.
We build a CNF such that $\Set[\big]{B \in \binom{E}{r}}{\text{$x_B$ is true}}$ forms the basis family of a matroid, $[r]$ and $E \setminus [r]$ are bases, and $([r], E \setminus [r])$ has no SI-ordering by collecting the following clauses.
\begin{description}
    \item[Basis exchange property:] for every $A, B \in \binom{E}{r}$ and $e \in A \setminus B$,
    \begin{align}
        \neg x_A \vee \neg x_B \vee 
        \bigvee_{f \in B \setminus A} x_{A - e + f}.
    \end{align}

    \item[Fixed basis:]
    \begin{align}
        x_{[r]}.
    \end{align}

    \item[Fixed basis:]
    \begin{align}
        x_{E \setminus [r]}.
    \end{align}
    
    \item[No SI-ordering:] for every permutation $a_1, \dotsc, a_r$ of $[r]$ and $b_1, \dotsc, b_r$ of $E \setminus [r]$,
    \begin{align}
        \bigvee_{0 \le i < j \le r} 
          \neg x_{\set{a_1, \dotsc, a_i, b_{i+1}, \dotsc, b_j, a_{j+1}, \dotsc, a_r}}.
    \end{align}
\end{description}
Note that if a non-disjoint basis pair $(A, B)$ of a matroid $M$ has no SI-ordering, then $(A \setminus B, B \setminus A)$ has no SI-ordering as well in $M/(A\cap B)$.
Thus, we can restrict our attention to the disjoint basis pair $([r], E \setminus [r])$ by verifying the unsatisfiability of the CNF from small $r$.

Our Python script to solve the above SAT instance is available at \url{https://github.com/taiheioki/sibo}.

\end{document}